\DeclareMathOperator{\Hom}{Hom}
\DeclareMathOperator{\Mat}{Mat}
\DeclareMathOperator{\Aut}{Aut}
\DeclareMathOperator{\Span}{span}
\DeclareMathOperator{\Ima}{Im}
\DeclareMathOperator{\Ker}{Ker}
\renewcommand{\P}{\mathbb{P}}
\newcommand{\id}{\mathrm{id}}
\newcommand{\G}{{\bf G}}
\newtheorem{Theorem}{Theorem}
\newtheorem{Proposition}[Theorem]{Proposition}
\newtheorem{Corollary}[Theorem]{Corollary}
\newtheorem{Question}[Theorem]{Question}
\newtheorem{Remark}[Theorem]{Remark}
\theoremstyle{definition}
\newtheorem{Example}[Theorem]{Example}
\newtheoremstyle{case}{}{}{}{}{}{:}{ }{}
\theoremstyle{case}
\newtheorem{case}{Case}
\title{On equivalence classes of homotopes of algebras and trilinear forms}
\author[1]{S.V. Guminov\thanks{sergey.guminov@gmail.com} }
\author[2, 3]{I.Yu. Zhdanovskiy\thanks {ijdanov@mail.ru}}
\affil[1]{National Research University High School of Economics}
\affil[2]{Moscow Institute of Physics and Technology, Laboratory AGHA}
\affil[3]{National Research University High School of Economics, Laboratory of Algebraic Geometry}
\numberwithin{Proposition}{section}
\numberwithin{Corollary}{section}
\numberwithin{Lemma}{section}
\numberwithin{Example}{section}
\numberwithin{Conjecture}{section}
\numberwithin{Problem}{section}
\numberwithin{Question}{section}
\numberwithin{Remark}{section}
\numberwithin{Theorem}{section}
\begin{document}

\maketitle

\begin{abstract}
    A homotope, or a mutation, of a $k$-algebra is a new algebra with the same underlying space, but with the multiplication law dependent on the multiplication law of the original algebra. In this paper, we show that a generic finite-dimensional algebra of dimension greater than 3 has infinitely many non-isotopic homotopes, and that, more generally, a similar result is true for generic trilinear forms. We also study a particular class of homotopes called $\Delta$-homotopes, where $\Delta$ is an element of the algebra, and show that there are algebras with infinitely many non-isomorphic homotopes even under some additional assumptions, such as the associativity of the algebra or $\Delta$ being well-tempered. 
\end{abstract}
\section{Introduction}

Non-associative algebras play an important role in modern algebra and its applications to areas such as geometry, quantum physics, and integrable systems. Lie algebras and Jordan algebras are the best-known examples of such algebras. Even though Lie algebras and Jordan algebras are very well-studied objects, general non-associative algebras are far from being fully understood.

Albert and Bruck (see \cite{Al}, \cite{Br}) established and studied the notions of {\it homotopy} and {\it isotopy} of non-associative algebras as a generalization of homomorphism and isomorphism  respectively. Namely, for a pair of $k$-algebras $A,\ B,$ a {\it homotopy} from $A$ to $B$ is a triple of $k$-linear maps $f_1, f_2, f_3:\ A\to B$ such that
\begin{equation}
\label{iso}
f_1(a) \cdot f_2(a')= f_3(a \times a')
\end{equation} 
for any $a,a' \in A$, where $\times$ and $\cdot$ are multiplication laws in $A$ and $B$ respectively, and $k$ is a field. If each one of these maps is bijective, the triple $(f_1,f_2,f_3)$ is called an {\it isotopy}. If the $k$-algebras $A$ and $B$ are isotopic, we will say that $A$ is an {\it isotope} of $B$ (or $B$ is an isotope of $A$). These notions enable one to get new structural results on non-associative algebras. 
For example, Bruck (see \cite{Br}) proved the structural theorem for finite-dimensional algebras and Malcev (\cite{Ma}) proved that an arbitrary algebra can be embedded into a suitable isotope of an associative algebra.



The main objects of study of this paper will be \textit{homotopes} of finite-dimensional algebras. Given a triple of $k$-linear maps $f_1, f_2, g:\ A\to A$, we can define a new multiplication $\times$ on the underlying $k$-vector space of $A$ by setting 
\begin{equation}
\label{homotope}
a \times a'=g(f_1(a) \cdot f_2(a')).
\end{equation} Let $B$ denote the resulting algebra, which we call a {\it homotope} of $A$. If $g$ is invertible, $(f_1,f_2,g^{-1})$ is a homotopy from $A$ to $B$. We will later show how this definition arises naturally when one considers the multiplication law of an algebra as a trilinear form.

A {\it$\Delta$-homotope} is a special case of the above definition. Given a  $k$-algebra $A$ and an element $\Delta\in A$, we may construct a homotope by taking $f_1(a)=a\cdot\Delta,\ f_2=f_3=\id.$ We then call the resulting algebra a  \textit{left $\Delta$-homotope} of $A$. Similarly, we can define the \textit{right $\Delta$-homotope} by setting $f_1=f_3=\id, f_2(a)=\Delta\cdot a.$ Note that if the original algebra $A$ is associative, the multiplication laws we get, which are $a\times a'=(a\cdot\Delta)\cdot a'$ and $a\times a'=a\cdot(\Delta\cdot a')$ respectively, coincide and are associative. We then refer to the resulting associative algebra as simply a {\it $\Delta$-homotope} and denote it by $A_\Delta$. This special case, also known as a mutation of an algebra, and its generalizations appeared in connection with Jordan algebras and alternative algebras (see, for example, \cite{Ja}, \cite{McC}). 

We may now consider these definitions in the context of unital associative algebras. It is well-known that the notion of isotopy is not far away from isomorphism for the case of unital associative algebras: a unital algebra isotopic to a unital associative algebra is necessarily isomorphic to it. 
On the other hand, $\Delta$-homotopes are seldom isomorphic to the original algebra. If $A$ is a unital associative algebra, $\Delta \in A$, then the $\Delta$-homotope ${A}_{\Delta}$ is associative, but unital if and only if $\Delta$ is invertible in $A$. By adjoining a unit to the algebra $A_{\Delta}$, we get
a unital associative algebra $\widehat{A}_{\Delta}$, which we will call the {\it unital} or {\it augmented homotope} of $A$.

Despite the simplicity of the definition, the theory of representations of unital homotopes has deep connections with advanced modern constructions in homological algebra. One can find in \cite{BZ1} the connection between the representation theory of unital homotopes and topics such as discrete harmonic analysis on graphs, the description of orthogonal decompositions of simple Lie algebras into a sum of Cartan subalgebras, the classification of mutually unbiased bases (for more details, see \cite{BZ2}, \cite{KZ1}, \cite{KZ2}, \cite{KZ3}), which have wide application in quantum mechanics and quantum information theory, the description of perverse sheaves on stratified topological spaces, the representation theory of quasi-hereditary algebras,
and more. Also, if the element $\Delta \in A$ is well-tempered, i.e. satisfies some natural conditions (see for details \cite{BZ1}, \cite{Z} or Section \ref{Delta_homotopes}), then the construction of the unital $\Delta$-homotope gives us an example of recollement of abelian categories (see \cite{Psa}, \cite{Aus}). 

Let us consider a fixed finite-dimensional (associative or non-associative) algebra $A$. It is then natural to ask the following question.
\begin{Question}
Are there infinitely or finitely many homotopes of $A$ up to some equivalence?
\end{Question}
We will study this question in the following settings:
\begin{itemize}
    \item{$A$ is a general non-associative algebra, and homotopes are equivalent if and only if they are isotopic,}
    \item{$A$ is a fixed algebra, and homotopes are equivalent if and only if they are isomorphic.}
\end{itemize}

In the first case, we study the question using methods of algebraic geometry. The key idea is to assign to a fixed algebra a determinantal hypersurface. This assignment is an invariant of isotopy up to projective isomorphism: if two algebras are isotopic, then their determinantal hypersurfaces are projectively isomorphic. We show that determinantal hypersurfaces of the homotopes of a general algebra of dimension greater than 3 are generally non-isomorphic by considering the linear sections of such hypersurfaces up to a projective isomorphism. Varieties with projectively isomorphic hyperplane sections have been extensively studied, and the complete classification of such varieties of dimensions 1 and 2 is known. Our main references for this classification are the works of Pardini \cite{pardini_remarks_1994}, \cite{ballico_1992} and L'vovsky \cite{Lvovsky_1994}.

In the second case, we construct examples of both associative and non-associative algebras with infinitely many non-isomorphic $\Delta$-homotopes. Furthermore, we construct an associative algebra with infinitely many non-isomorphic $\Delta$-homotopes for a special case when $\Delta$ is a well-tempered element.

Our article is organized as follows. 
In Section \ref{Homotopes_non_associative}, we study non-associative algebras and generalize our result to the case of general trilinear forms. Section \ref{Delta_homotopes} is devoted to the study of $\Delta$-homotopes of associative algebras.

{\bf Acknowledgements.}
We are grateful to Ilya Karzhemanov for fruitful discussions.
The study has been funded within the framework of the HSE University Basic Research Program. The second author was partially supported by the RFBR grant project 18-01-00908.

\subsection{Notation}
We will be working over an algebraically closed field $k$ of characteristic zero.  We will write $\P^n$ instead of $\P^n_k$ for the $n$-dimensional projective space over $k,$ considered as an algebraic variety.  

We will use the same notation for a vector space and the corresponding affine algebraic variety. As an example, $X\subset V$, $V$ a finite-dimensional vector space, is a shorthand for $X\subset {\rm Spec}\ S^{\bullet}(V^*)$, with $V$ naturally identified with the closed points of the scheme ${\rm Spec}\ S^{\bullet}(V^*).$

\section{Homotopes of a general finite-dimensional non-associative algebra.}
\label{Homotopes_non_associative}

In this section we will study the following question:
\begin{Question}
For a general non-associative algebra $A$, are there infinitely or finitely many homotopes of $A$ up to isotopy?
\end{Question}

\subsection{Determinantal variety of a tensor.}
Consider three vector spaces $V_1, V_2, V_3$ of dimension $d_1, d_2, d_3$ respectively. For any vector space $V$ we denote by ${\mathbb P}(V)$ the projectivization of $V$. To a tensor $m \in V_1 \otimes V_2 \otimes V_3$ we can associate a determinantal variety $X(m)$ in one of the ${\mathbb P}(V_i^*)$. To simplify notation, and since that is the case we will be working with later on, in this section we will, without loss of generality, consider the case $i=1$.

 Fix some isomorphism $V_1\otimes V_2 \otimes V_3 \cong {\rm Hom}_k(V^*_1, V_2\otimes V_3)$. 
Recall that a tensor $t \in V_2 \otimes V_3$ has rank $r$ if there is a decomposition $t = \sum^r_{s=1} u_s \otimes w_s$ for some $u_s \in V_2, w_s \in V_3, s = 1,...,r$, and $r$, the number of terms in the sum, is minimal over all such decompositions. 

Denote by ${\cal D}_l(V_2, V_3) \subset V_2 \otimes V_3$ the subvariety consisting of tensors of rank not greater than $l\leqslant \min(d_2,d_3)$. We will make use of the well-known facts that ${\rm codim}_{V_2 \otimes V_3} {\cal D}_l(V_2,V_3)=(d_2-l)(d_3-l)$ and that the singular locus $\text{Sing }{\cal D}_l(V_2,V_3)$ is exactly ${\cal D}_{l-1}(V_2,V_3)\subset{\cal D}_{l-1}(V_2,V_3)$. (cf. \cite{Eisenbud_linear}). 
We set $l=\min(d_2,d_3)-1$ and define $X(m) \subseteq V^*_1$ as the pullback of ${\cal D}_l(V_2, V_3)$ along $m: V^*_1 \to V_2 \otimes V_3$. It is an affine cone. We also consider its projectivization $\hat{X}(m)=\P(X(m))$. Of course, if $m(V^*_1) \subseteq {\cal D}_l(V_2,V_3)$ then $\hat{X}(m) = {\mathbb P}(V_1)$ and vice versa. 

If $m: V_1^*\to V_2\otimes V_3$ is injective, $\hat{X}(m)$ may also be described as the pullback of $\P({\cal D}_l(V_2, V_3)),$ and then for a general such $m$ the singular locus of $\hat{X}(m)$ will be the pullback of the singular locus $\P({\cal D}_{l-1}(V_2, V_3))$ and have the expected codimension $d_2+d_3-2l+1$.

We have the action of ${\bf G} = {\rm GL}(V_1)\times {\rm GL}(V_2)\times {\rm GL}(V_3)$ on $V_1\otimes V_2\otimes V_3$ defined as follows. $\forall {\bf g}=(g_1,g_2,g_3)\in {\bf G},\ v_i\in V_i,\ i=1,2,3,$ we define ${\bf g}\cdot (v_1\otimes v_2\otimes v_3)=(g_1(v_1)\otimes g_2(v_2)\otimes g_3(v_3))$ and extend this action to arbitrary tensors by linearity. Note that when we consider $m\in V_1\otimes V_2\otimes V_3$ as a linear morphism $V_1^*\otimes V_2^*\to V_3$ by fixing some isomorphism $V_1\otimes V_2 \otimes V_3 \cong {\rm Hom}_k(V^*_1\otimes V_2^*, V_3)$, we obtain that $({\bf g}\cdot m)(v^1\otimes v^2)=(g_3\circ m)(g_1^*(v^1)\otimes g_2^*(v^2))$ $\forall v^1\in V_1^*,\ v^2\in V_2^*$, where $g_i^*: V_i^* \to V_i^*$ are the maps adjoint to $g_i$. This generalizes the notion of isotopy, so for a pair of tensors $m_1, m_2\in V_1^*\to V_2\otimes V_3$ we will say that $m_1$ is \textit{isotopic} to $m_2$ and write $m_1 \sim m_2$ if there is an element ${\bf g} \in {\bf G}$ such that ${\bf g}\cdot m_1 = m_2$.
\begin{Proposition}\label{PROP_isotopic_isomorphic_determinantal}
If $m \sim m'$, then $X(m), X(m')\subset V_1^*$ are linearly isomorphic and $\hat{X}(m), \hat{X}(m')\subset \P(V_1^*)$ are projectively isomorphic.
\end{Proposition}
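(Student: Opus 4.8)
The plan is to reduce the statement to a single observation: under the identification $V_1\otimes V_2\otimes V_3\cong\Hom_k(V_1^*,V_2\otimes V_3)$ used to define $X(m)$, the action of ${\bf G}$ becomes pre- and post-composition by linear isomorphisms, one of which preserves tensor rank. First I would rewrite the action in this identification. Mimicking the computation already carried out in the excerpt for the identification $V_1\otimes V_2\otimes V_3\cong\Hom_k(V_1^*\otimes V_2^*,V_3)$, I would check on decomposable tensors $v_1\otimes v_2\otimes v_3$ that for $m'={\bf g}\cdot m$ one has, as maps $V_1^*\to V_2\otimes V_3$,
\begin{equation}
\label{eq:plan-transf}
m' = (g_2\otimes g_3)\circ m\circ g_1^*,
\end{equation}
where $g_1^*:V_1^*\to V_1^*$ is the adjoint of $g_1$ and $g_2\otimes g_3:V_2\otimes V_3\to V_2\otimes V_3$ is the induced map; both are linear isomorphisms since the $g_i$ are. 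This is exactly the analogue of the formula in the excerpt, with $V_2\otimes V_3$ now playing the role of the output factor.

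The key step is then to note that $g_2\otimes g_3$ preserves the rank of any tensor $t\in V_2\otimes V_3$: if $t=\sum_{s=1}^r u_s\otimes w_s$ then $(g_2\otimes g_3)(t)=\sum_{s=1}^r g_2(u_s)\otimes g_3(w_s)$, so the rank cannot increase, and applying $g_2^{-1}\otimes g_3^{-1}$ shows it cannot decrease either. Consequently $g_2\otimes g_3$ maps the determinantal locus $\mathcal{D}_l(V_2,V_3)$ bijectively onto itself for every $l$, in particular for $l=\min(d_2,d_3)-1$.

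With these two facts in hand the conclusion is a direct pullback computation. Since $X(m')$ is the preimage $(m')^{-1}\bigl(\mathcal{D}_l(V_2,V_3)\bigr)$ and $m'$ factors as in \eqref{eq:plan-transf}, I would write
\begin{equation}
\label{eq:plan-pullback}
X(m')=(g_1^*)^{-1}\bigl(m^{-1}\bigl((g_2\otimes g_3)^{-1}(\mathcal{D}_l(V_2,V_3))\bigr)\bigr)=(g_1^*)^{-1}\bigl(m^{-1}(\mathcal{D}_l(V_2,V_3))\bigr)=(g_1^*)^{-1}(X(m)),
\end{equation}
where the middle equality uses the rank-invariance just established. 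Thus $(g_1^*)^{-1}$ is a linear automorphism of $V_1^*$ carrying $X(m)$ onto $X(m')$, which is the asserted linear isomorphism. Because $X(m)$ and $X(m')$ are affine cones, this automorphism descends to $\mathbb{P}(V_1^*)$ and restricts to a projective isomorphism $\hat{X}(m)\to\hat{X}(m')$, as required.

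The only genuine obstacle is bookkeeping: the action recorded in the excerpt is written for the identification with $\Hom_k(V_1^*\otimes V_2^*,V_3)$, whereas $X(m)$ is defined through $\Hom_k(V_1^*,V_2\otimes V_3)$, so the transformation law \eqref{eq:plan-transf} must be re-derived rather than quoted directly, with care taken that $g_1$ acts via its adjoint on the dual factor. Once \eqref{eq:plan-transf} is pinned down, the essential point — that the $V_1$-factor alone moves the variety, while the $V_2,V_3$-factors act through a rank-preserving, hence $\mathcal{D}_l$-preserving, automorphism — makes the remainder routine.
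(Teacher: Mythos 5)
Your proof is correct and takes essentially the same approach as the paper's: the key points in both are that the action of $\G$ preserves tensor rank and hence the locus ${\cal D}_l(V_2,V_3)$, while the $V_1$-factor acts on $m:V_1^*\to V_2\otimes V_3$ by precomposition with $g_1^*$, so that $X(m')=(g_1^*)^{-1}(X(m))$ and the isomorphism descends to the projectivizations. Your explicit transformation law $m'=(g_2\otimes g_3)\circ m\circ g_1^*$ merely spells out in formulas what the paper states in words.
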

\begin{proof}
Let ${\bf g} = (g_1,g_2,g_3)$ be the element of ${\bf G}$ such that ${\bf g}\cdot m = m'$.
Since the rank of a pure tensor is invariant under the action of $\G$, ${\cal D}_l(V_2,V_3)$, $l=\min(d_2,d_3)-1,$ is invariant under the action of $\G$. When we consider $m$ as a morphism $V_1^*\to V_2\otimes V_3$, the action of $g_1$ on $V_1$ corresponds to the action $g_1\cdot m=m\circ g_1^*,$ $g_1^*:V^*_1\to V^*_1.$ Then the pullback of ${\cal D}_l(V_2,V_3)$ along ${\bf g}\cdot m$ is the same as the pullback along $m\circ g_1^*$, so $g_1^*$ induces a linear isomorphism between $X(m), X(m')\subset V_1^*$. Since $g_1^*$ is an isomorphism, it also induces a projective automorphism of $\P(V_1^*)$, which gives us a projective isomorphism of $\hat{X}(m)$ and $\hat{X}(m')$.

\end{proof} 

\subsection{Main result.}
Let $(V,m)$ denote a $k$-algebra with the underlying $k$-vector space $V$ and the multiplication law $m\in \Hom(V\otimes V,V)\cong V^* \otimes V^* \otimes V$. To apply the above construction to the study of algebras and their homotopes, we note that if $m, m' \in V^* \otimes V^* \otimes V,$ then $m \sim m'$ is equivalent to the algebras $(V,m)$ and $(V,m')$ being isotopic: $\bf{g}=(g_1,g_2,g_3)$ is such that $\bf{g}\cdot m=m',$ if and only if $(g_1^*,g_2^*,g_3^{-1})$ is an an isotopy from $(V,m)$ to $(V,m')$.


Our goal in this section will be to prove the following theorem.
\begin{Theorem}\label{THM-main}
Let $A=(V,m)$ be a general $d-$dimensional $k-$algebra over an algebraically closed field $k$ of characteristic 0. Then there are infinitely many homotopes of $A$ up to isotopy if and only if $d\geqslant 4$.
\end{Theorem}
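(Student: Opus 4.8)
The plan is to translate the counting of isotopy classes of homotopes into a problem about linear sections of the determinantal hypersurface $\hat{X}(m)\subset\P^{d-1}$, a degree-$d$ hypersurface which by Proposition \ref{PROP_isotopic_isomorphic_determinantal} is a projective-isomorphism invariant of the isotopy class. First I would record how a homotope acts on this hypersurface. For a covector $\xi$ on the target, let $M_\xi$ be the matrix of the bilinear form $\xi\circ m$, and let $P(\xi)=\det M_\xi$ be the degree-$d$ form with $\hat{X}(m)=\{P=0\}\subset\P(V^*)\cong\P^{d-1}$. A homotope with data $(f_1,f_2,g)$ satisfies $\xi\circ m'=(g^{*}\xi)\circ m\circ(f_1\otimes f_2)$, so in suitable bases $M'_\xi=f_1^{\top}M_{g^{*}\xi}f_2$ and hence $\det M'_\xi=\det(f_1)\det(f_2)\,P(g^{*}\xi)$. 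Consequently, taking $f_1=f_2=\id$ and $g$ of rank $r$, the determinantal hypersurface of the homotope is the cone, with vertex $\P(\Ker g^{*})$ of dimension $d-1-r$, over the linear section $\{P|_{\Ima g^{*}}=0\}\subset\P(\Ima g^{*})=\P^{r-1}$. Since a projective isomorphism of two such cones with vertices of equal dimension restricts to a projective isomorphism of their bases, two such homotopes are non-isotopic as soon as the corresponding linear sections of $\hat{X}(m)$ are projectively non-isomorphic. Thus, for the infinitude statement it suffices to exhibit infinitely many pairwise non-isomorphic sections; note that Proposition \ref{PROP_isotopic_isomorphic_determinantal} gives only a \emph{necessary} condition for isotopy, so this reduction serves the $d\ge 4$ direction, while finiteness will require a separate argument.

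For $d\ge 4$ I would fix a general three-dimensional projective subspace $\P(L_0)\subseteq\P^{d-1}$ (for $d=4$, $\P(L_0)=\P^{3}$ is the whole space) and set $Z=\hat{X}(m)\cap\P(L_0)$, a determinantal surface of degree $d$ in $\P^{3}\cong\P(L_0)$; for general $m$ and general $L_0$ the subspace avoids $\mathrm{Sing}\,\hat{X}(m)$, so $Z$ is smooth by Bertini's theorem. The homotopes with $f_1=f_2=\id$ and $g$ of rank $3$ whose coimage $\Ima g^{*}$ is a hyperplane $H$ of $L_0$ then have determinantal varieties which are cones over the hyperplane sections $Z\cap H$, the degree-$d$ plane curves. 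The key input is the classification of Pardini and L'vovsky \cite{pardini_remarks_1994,Lvovsky_1994}: a surface all of whose general hyperplane sections are projectively isomorphic to one another must be a cone or lie on their short list of exceptional surfaces (scrolls, the Veronese surface, and the like). A smooth determinantal surface of degree $d\ge 4$ is neither a cone nor of minimal degree, hence is none of these. Therefore the isomorphism class of $Z\cap H$ is non-constant in $H$; since the assignment $H\mapsto[Z\cap H]$ has constructible fibers over the irreducible variety $(\P^{3})^{*}$, a finite image would force one fiber to be dense and hence the general section to lie in a single class, a contradiction. Thus the curves $Z\cap H$, and with them the cones over them, realize infinitely many projective-isomorphism classes, producing infinitely many pairwise non-isotopic homotopes of $A$.

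For $d\le 3$ I would prove finiteness by reducing every homotope to a normal form. Using the isotopy freedom $m'\sim (g_3g)\circ m\circ((f_1g_1)\otimes(f_2g_2))$ with $g_1,g_2,g_3$ invertible, one may assume that $f_1,f_2$ and $g$ are idempotent projections determined, up to the $\G$-action, by the subspaces $U_1=\Ima f_1$, $U_2=\Ima f_2$ and $W=\Ker g$; when all three maps are invertible the homotope is simply isotopic to $A$, giving one class. In the remaining degenerate cases $m'$ becomes a compression of $m$ to spaces of dimension at most two, and what must be checked is that for $d\le 3$ these configurations carry no continuous moduli. This is a finite piece of low-dimensional linear algebra: the relevant bilinear maps are pencils of at most $3\times 3$ forms, classified up to equivalence by Kronecker's theory, and the only apparent source of a modulus---the roots of the binary form $\det(\lambda A+\mu B)$ along the pencil line---is eliminated because at most three points of $\P^{1}$ are all equivalent under the $\mathrm{PGL}_2$ arising from the $\G$-action on the two-dimensional image. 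Hence only finitely many isotopy classes occur, which completes the equivalence.

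The main obstacle is the finiteness direction. It is exactly at $d=4$ that the above normalization breaks down: a section now meets $\hat{X}(m)$ in a configuration governed by four (or more) points of $\P^{1}$, or by a plane curve of positive-dimensional moduli, and the cross-ratio can no longer be normalized away. This is precisely the phenomenon captured by the Pardini--L'vovsky classification and is the reason the threshold sits between $3$ and $4$. For the infinitude direction the only delicate points are confirming that a general determinantal surface is genuinely absent from the exceptional list and that the cone bookkeeping (equality of vertex dimensions) is respected, both of which are straightforward once $Z$ is known to be smooth of non-minimal degree.
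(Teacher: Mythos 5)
Your proposal is correct and sits inside the same overall architecture as the paper --- the determinantal hypersurface as an isotopy invariant (\autoref{PROP_isotopic_isomorphic_determinantal}), homotopes realized as cones over linear sections of $\hat{X}(m)$ (\autoref{PROP_homotope_cone}, where the paper degenerates the first slot and you the third; these are symmetric), a classification of varieties with projectively isomorphic linear sections for the infinitude half, and tensor-orbit finiteness for $d\leqslant 3$ --- but in both halves you run on genuinely different key lemmas. For $d\geqslant 4$ the paper cuts $\hat{X}(m)$ down to a smooth plane curve $C\subset\P^2$ of degree $d\geqslant 4$ and applies the Ballico--L'vovsky criterion (\autoref{THM_PIHS_curves}: projectively isomorphic hyperplane sections iff $\deg\leqslant n+1$), so it needs only rank-$2$ maps and cones over $0$-dimensional bases; you instead take a smooth surface $Z=\hat{X}(m)\cap\P^3$ and invoke Pardini's surface classification \cite{pardini_remarks_1994}, which is a heavier input and requires checking $Z$ avoids the exceptional list (smoothness excludes cones and the tangent developable, $\deg\geqslant 4$ excludes the minimal-degree and projected scroll cases --- your check is right), but buys you curves rather than point configurations as cone bases, where ``isomorphism of cones restricts to an isomorphism of bases'' is cleaner since the vertex is the singular locus; your explicit constructibility argument upgrading ``not projectively isomorphic sections'' to ``infinitely many classes'' is a step the paper leaves implicit. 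For $d\leqslant 3$ the paper (\autoref{PROP_d=3}) splits a homotopy into three one-slot homotopies, factors the degenerate map through a fixed $2$-dimensional space, and cites Parfenov \cite{Parfenov_2001} for the finiteness of orbits of $(2,3,3)$-tensors (and the $7$ orbits for $d=2$), with an explicit lifting of $\mathrm{GL}(W)$ through the fixed surjection to honest isotopies; your Kronecker-pencil argument with $\mathrm{PGL}_2$-normalization of the at most three roots of $\det(\lambda A+\mu B)$ is in substance a self-contained proof of precisely the instance of Parfenov's theorem the paper uses, and it better explains the threshold (four roots carry a cross-ratio, which is exactly why $d=4$ flips). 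To make that half airtight you would still need the points the paper handles explicitly: that projections with the same image (resp.\ kernel) differ by a unipotent invertible factor, so your normal form depends only on the subspaces; that the $\mathrm{GL}_2$ on the pencil parameter lifts through the fixed projection to an element of $\G$ acting on $V$; and that singular pencils contribute only discrete Kronecker minimal indices. These are write-up gaps, not strategic ones.
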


The theorem will follow from a series of propositions. We begin with the 'if' direction, the case when $d\geqslant 4$. Given a multiplication law $m$, our plan is to now show that under the construction $\hat{X}(m)$ described above, with $\hat{X}(m)$ being a subset of the projectivization of the dual space of the second term in $V^*\otimes V^*\otimes V$, there is a family of homotopes of $A=(V,m)$ corresponding to projective cones over plane sections of the determinantal variety $\hat{X}(m)$, and that these sections can not be generally isomorphic.

\begin{Proposition}\label{PROP_homotope_cone}
Let $A=(V,m)$ be a finite-dimensional $k-$algebra.Let $A'=(V,m')$ be the homotope of $A$ obtained by applying the triple $(f,\id,\id)$. Then $\hat{X}(m')$ is projectively isomorphic to a projective cone with basis $\P(\Ima f\cap X(m))\subset\P(\Ima f)\subset \P(V)$ and apex $\P(\Ker f)\subset \P(V).$
\end{Proposition}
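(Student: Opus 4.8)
The plan is to compute the determinantal variety of the homotope directly and recognize it as a preimage under $f$. First I would record the multiplication of $A'$: applying the triple $(f,\id,\id)$ to \eqref{homotope} gives $a\times a'=f(a)\cdot a'$, so $m'=m\circ(f\otimes\id)$. Recall that $X(m)\subset V$ is the pullback of the determinantal locus along the map $V\to\Hom(V,V)$, $\phi\mapsto M_\phi$, where $M_\phi$ is the multiplication operator obtained by fixing, to $\phi$, the argument of $m$ on which $f$ acts; explicitly $M_\phi(b)=\phi\cdot b$. Since $l=\dim V-1$ and $\mathcal{D}_l$ is precisely the locus of non-invertible operators, we have $X(m)=\{\phi\in V:\det M_\phi=0\}$, an affine cone cut out by the homogeneous polynomial $\det M_\phi$.

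The key step is the identity $X(m')=f^{-1}(X(m))$. Indeed, the multiplication operator of $A'$ at $\phi$ is $M'_\phi(b)=\phi\times b=f(\phi)\cdot b=M_{f(\phi)}(b)$, so the map parametrizing the operators of $A'$ is $m\circ f$, whence $X(m')=(m\circ f)^{-1}(\mathcal{D}_l)=f^{-1}(X(m))$. Equivalently, in the language of Proposition~\ref{PROP_isotopic_isomorphic_determinantal}, the homotope acts on the relevant tensor factor by $f^*$, which precomposes the defining map by $(f^*)^*=f$; the essential point is that this $f$ need not be invertible, which is exactly why a homotope, unlike an isotopy, can change the determinantal variety.

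It then remains to describe the cone $f^{-1}(X(m))$. I would fix a complement $C$ with $V=\Ker f\oplus C$, so that $f|_C\colon C\xrightarrow{\sim}\Ima f$. Writing $\phi=\kappa+c$ with $\kappa\in\Ker f$ and $c\in C$, we have $f(\phi)=f(c)$, so $\phi\in f^{-1}(X(m))$ if and only if $f(c)\in X(m)\cap\Ima f$, i.e.\ if and only if $c\in (f|_C)^{-1}(X(m)\cap\Ima f)$. Hence
\[
f^{-1}(X(m))=\Ker f\oplus (f|_C)^{-1}\bigl(X(m)\cap\Ima f\bigr),
\]
whose projectivization is the projective cone with apex $\P(\Ker f)$ and base $\P\bigl((f|_C)^{-1}(X(m)\cap\Ima f)\bigr)\subset\P(C)$. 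Because $X(m)$ is an affine cone, the intersection $X(m)\cap\Ima f$ is again a cone, and the linear isomorphism $f|_C$ carries $\P(C)$ onto $\P(\Ima f)$, taking this base to $\P(\Ima f\cap X(m))$; this yields the asserted projective isomorphism.

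The main obstacle is bookkeeping rather than depth: one must match the slot of $m$ used to build $\hat X(m)$ with the argument that $f$ modifies. If instead the operator is formed from the other argument, the homotope precomposes $M_\phi$ by $f$ (yielding $M_\phi\circ f$), and for non-invertible $f$ this forces $\det(M_\phi\circ f)\equiv 0$, so $X(m')$ becomes all of $V$ and the cone description fails. A secondary point requiring care is that $\Ker f$ and $\Ima f$ need not be transverse (for instance when $f$ is nilpotent), so the base cannot literally be placed in $\P(\Ima f)$ disjointly from the apex; passing to the complement $C$ and claiming only a projective isomorphism, rather than an equality, is what resolves this.
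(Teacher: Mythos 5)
Your proof is correct and follows essentially the same route as the paper's: identify $X(m')$ as the pullback $f^{-1}(X(m))$, split $V=\Ker f\oplus C$ to get $X(m')\cong(\Ima f\cap X(m))\times\Ker f$, and projectivize to obtain the cone. Your added remarks --- pinning down which tensor slot $f$ acts on (where the paper's surrounding text is slightly loose) and noting that $\Ker f$ and $\Ima f$ need not be transverse, so only a projective isomorphism is claimed --- are accurate refinements of the same argument, not a different one.
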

\begin{proof}
Let $m'$ be the multiplication law of the homotopic algebra, i.e. $m'(a,b)=m(f(a),b)$. As in \autoref{PROP_isotopic_isomorphic_determinantal}, we see that the affine variety $X(m')$ is the pullback of $X(m)$ along $f$. We are interested in the case when $f$ has a kernel, since otherwise $(f,\id,\id)$ is an isotopy. But then the vector space $V$ splits noncanonically as $\Ker f\oplus V'$, with $f|_{V'}$ injective, which gives us an isomorphism of affine algebraic varieties $X(m')\cong(\Ima f\cap X(m))\times \Ker f,$ and then the projectivization $\hat X(m')$ is a projective cone with basis $\P(\Ima f\cap X(m))\subset\P(\Ima f)\subset \P(V)$ and apex $\P(\Ker f)\subset \P(V).$ 
\end{proof}

Following \cite{pardini_remarks_1994}, we say that a variety $X\subset \P^r$ over an algebraically closed field of characteristic zero has \textit{projectively isomorphic hyperplane sections} if there is a non-empty open set of hyperplanes $U\subset(\P^r)^\vee$ such that $\forall H,H'\in U$ the corresponding hyperplane sections $H\cap X, H'\cap X$ are projectively isomorphic, i.e. there is an automorphism $\gamma$ of $\P^r$ such that $ \gamma(H)=H',\ \gamma(H\cap X)=H'\cap X$. The sections $H\cap X$ for $H\in U$ are then called generic. Theorem 2.12 of \cite{pardini_remarks_1994} classifies all projective surfaces with projectively isomorphic hyperplane sections, while curves with projectively isomorphic hyperplane sections were classified in characteristic 0 by L'vosky \cite{Lvovsky_1994} and in arbitrary characteristic by Ballico \cite{ballico_1992}.

\begin{Theorem}[Ballico, Theorem 0.2; L'vovsky, Proposition 0.1]\label{THM_PIHS_curves}
Let $C$ be an integral non-degenerate projective curve in $\P^n_k,$ $k$ algebraically closed of characteristic 0. Then it has projectively isomorphic hyperplane sections if and only if $\deg C\leqslant n+1.$
\end{Theorem}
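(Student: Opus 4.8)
The plan is to recast the property of having projectively isomorphic hyperplane sections (PIHS) as an infinitesimal, and ultimately linear-algebraic, condition on the generic hyperplane section, and then to treat the two implications separately. Fix a generic hyperplane $H_0$ and write $\Gamma_0=H_0\cap C=\{p_1,\dots,p_d\}$, a reduced set of smooth points of $C$ (a generic $H_0$ misses the finite singular locus of the integral curve $C$). Consider the classifying map $\Phi$ sending a hyperplane $H$ in a neighbourhood $U\subset(\P^n)^\vee$ of $H_0$ to the projective-equivalence class of the configuration $H\cap C\subset H\cong\P^{n-1}$. By definition $C$ has PIHS exactly when the image of $\Phi$ is a point, i.e. when $d\Phi\equiv 0$ on $U$. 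Since any projective automorphism of a hyperplane extends to one of $\P^n$, and since $\Gamma_0$ spans $H_0$, the tangent space to the moduli of $d$ points at $[\Gamma_0]$ is $\big(\bigoplus_i T_{p_i}\P^n\big)/\mathrm{Im}(\mathrm{ev})$, where $\mathrm{ev}\colon \mathrm{End}(V)\to\bigoplus_i T_{p_i}\P^n$ evaluates the vector field attached to $A\in\mathrm{End}(V)$ at the points $p_i$ (scalar matrices lie in the kernel). Moving $H_0$ produces a velocity tuple $(\dot p_1,\dots,\dot p_d)\in\bigoplus_i T_{p_i}C$, and $d\Phi$ sends $\dot H$ to its class modulo $\mathrm{Im}(\mathrm{ev})$. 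Hence PIHS is equivalent to the inclusion $W\subseteq\mathrm{Im}(\mathrm{ev})$, where $W\subset\bigoplus_i T_{p_i}C$ is the space of velocity tuples; transversality and non-degeneracy give $\dim W=n$.

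For the \emph{if} direction I would not even need the infinitesimal criterion. As $C$ is non-degenerate, $d\geqslant n$, so $d\leqslant n+1$ means $d\in\{n,n+1\}$. The General (Uniform) Position Theorem, valid in characteristic $0$, guarantees that the points of a generic hyperplane section lie in linearly general position in $\P^{n-1}$, i.e. any $n$ of them are linearly independent. For $d=n$ the $n$ points then form a projective basis of $\P^{n-1}$, and for $d=n+1$ they form a projective frame; in either case any two such configurations are carried one into the other by an element of $\mathrm{PGL}(n)$. Thus all generic sections are projectively equivalent and $C$ has PIHS.

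For the \emph{only if} direction I would run the dimension count for the criterion $W\subseteq\mathrm{Im}(\mathrm{ev})$. Writing $T_C=\bigoplus_i T_{p_i}C$ and choosing for each $i$ a vector $t_i$ spanning, together with $p_i$, the affine cone over the embedded tangent line, the preimage $\mathrm{ev}^{-1}(T_C)=\{A\in\mathrm{End}(V):Ap_i\in\langle p_i,t_i\rangle\ \forall i\}$ is cut out by $d(n-1)$ linear conditions, while $\ker(\mathrm{ev})$, the matrices having every $p_i$ as an eigenvector, is $(n+2)$-dimensional once $d\geqslant n+1$. Subtracting gives
\[
\dim\big(\mathrm{Im}(\mathrm{ev})\cap T_C\big)=n^2+n-1-d(n-1),
\]
which is strictly less than $n=\dim W$ exactly when $d\geqslant n+2$ (and equals $n$ when $d=n+1$, matching the sharp threshold). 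In that range $W$ cannot lie in $\mathrm{Im}(\mathrm{ev})$, so $d\Phi\not\equiv 0$, the configuration genuinely varies in moduli, and $C$ fails to have PIHS.

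The main obstacle is that this last count is only the \emph{expected} one: it presumes that the $d(n-1)$ conditions defining $\mathrm{ev}^{-1}(T_C)$ are independent (equivalently $\dim\mathrm{ev}^{-1}(T_C)\leqslant 2n+1$), and this must hold for \emph{every} integral non-degenerate curve of degree $\geqslant n+2$, not merely a generic one, since a larger $\mathrm{ev}^{-1}(T_C)$ could restore the inclusion. Proving it amounts to showing that the pairs $(p_i,t_i)$ — the points of a generic hyperplane section together with their tangent directions — are themselves in sufficiently general position, which is precisely the kind of statement furnished by the Uniform Position Principle in characteristic $0$. This is also exactly where the characteristic hypothesis enters and where Ballico's positive-characteristic analysis must diverge, so I would expect the bulk of the technical work to lie here, in upgrading the naive dimension count to a statement valid for all integral curves.
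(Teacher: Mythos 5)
First, a point of reference: the paper does not prove this statement at all --- it is imported from Ballico (Theorem 0.2) and L'vovsky (Proposition 0.1) and used as a black box, so your attempt can only be compared with those sources. Your ``if'' direction is correct and complete: non-degeneracy forces $d=\deg C\geqslant n$; the General Position Theorem (valid in characteristic $0$) puts the $d\in\{n,n+1\}$ points of a generic section in linearly general position in $H\cong\P^{n-1}$; such configurations form a single $\mathrm{PGL}_n$-orbit (a simplex, respectively a projective frame); and any automorphism of $H$ extends to $\P^n$ and automatically carries $H$ to $H'$ because the sections span their hyperplanes. Your numerology in the ``only if'' direction is also right: $\dim W=n$ under transversality, $\dim\Ker(\mathrm{ev})=n+2$ for $d\geqslant n+1$ points in linearly general position spanning a hyperplane, and the expected count $n^2+n-1-d(n-1)<n$ exactly when $d\geqslant n+2$, reproducing the sharp threshold.

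Nevertheless, the ``only if'' half has a genuine gap --- the very one you flag. A minor, fixable issue first: the reduction of PIHS to $d\Phi\equiv 0$ needs either a local geometric quotient of configurations near $[\Gamma_0]$ (available on the general-position locus, where stabilizers are trivial for $d\geqslant n+1$) or a family of equivalences $\gamma_t$ depending algebraically on $t$, which requires a generic-choice argument since PIHS only asserts the existence of \emph{some} $\gamma$ for each pair of hyperplanes; both are standard but must be carried out. The decisive gap is the unproved bound $\dim\mathrm{ev}^{-1}(T_C)\leqslant 2n+1$, i.e.\ that the flags $(p_i,\langle p_i,t_i\rangle)$ at the points of a generic section impose enough independent conditions on $\mathrm{End}(V)$, and this must hold for \emph{every} integral non-degenerate curve of degree $\geqslant n+2$, including singular ones. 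Your appeal to the Uniform Position Principle cannot close this: uniform position controls the points $p_i$ of the section, but says nothing about the tangent directions $t_i$, which are dictated by the global geometry of $C$ and could a priori satisfy unexpected linear degeneracies (e.g.\ all tangent lines meeting a fixed linear subspace or lying on a fixed low-degree hypersurface). Excluding such degeneracies is precisely the technical content of the cited results, and Ballico's and L'vovsky's actual proofs proceed through different, more global arguments rather than by upgrading this naive dimension count. So your attempt establishes the easy implication and sets up a plausible infinitesimal framework with the correct arithmetic for the hard implication, but the hard implication itself remains unproven.
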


\begin{Proposition}\label{PROP_not_projectively_isomorphic_sections}
For a general $d$-dimensional $k-$algebra $A=(V,m)$, $d\geqslant 4$, the projective variety $\hat{X}(m)\subset \P^{d-1}$ has infinitely many projectively non-isomorphic  sections by lines $\P^1\subset\P^{d-1}$.
\end{Proposition}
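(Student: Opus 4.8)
The plan is to reduce the statement to the classification of curves with projectively isomorphic hyperplane sections (\autoref{THM_PIHS_curves}) by passing to a general plane section of $\hat X(m)$. For the multiplication law $m$ of a general algebra, viewed as a map $m\colon V\to V^*\otimes V\cong \Mat_{d\times d}$, the variety $X(m)$ is the zero locus of the degree-$d$ form $x\mapsto\det(m(x))$, which is nonzero because the image of a general $m$ is not contained in the singular-matrix hypersurface and hence meets the locus of invertible matrices; thus $\hat X(m)\subset\P^{d-1}$ is a hypersurface of degree $d$. By the codimension computation recalled before \autoref{PROP_isotopic_isomorphic_determinantal}, for general $m$ the singular locus of $\hat X(m)$ is the pullback of $\P(\mathcal D_{d-2}(V^*,V))$ and has codimension $3$ in $\P^{d-1}$, i.e. codimension $2$ inside $\hat X(m)$. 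Being a hypersurface it is Cohen--Macaulay, hence $S_2$, and regular in codimension $1$, so by Serre's criterion it is normal, and in particular integral.

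Next I would fix a general plane $\Pi\cong\P^2\subset\P^{d-1}$ and set $C=\hat X(m)\cap\Pi$. By Bertini's theorems, for general $\Pi$ the section $C$ is an integral curve of degree $d$; since $d\geqslant 4>1$ it is not a line, so it spans $\Pi$ and is therefore non-degenerate in $\P^2$. The lines of $\Pi$ are exactly its hyperplanes, so the sections of $\hat X(m)$ by lines contained in $\Pi$ are precisely the hyperplane sections of $C$. Moreover, two line sections $\ell\cap\hat X(m)$ and $\ell'\cap\hat X(m)$ are projectively isomorphic in the sense of the proposition if and only if the corresponding degree-$d$ divisors on $\ell\cong\ell'\cong\P^1$ are $\operatorname{PGL}_2$-equivalent: one direction is the restriction of the ambient automorphism to the line, and the converse holds because any projective isomorphism between two lines extends to an automorphism of the ambient projective space. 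In particular the notion is insensitive to whether one works inside $\Pi$ or inside $\P^{d-1}$, so it suffices to produce infinitely many classes among lines of $\Pi$.

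Now I would apply \autoref{THM_PIHS_curves} to $C$: since $C\subset\P^2$ is integral, non-degenerate, and $\deg C=d\geqslant 4>2+1$, the curve $C$ does \emph{not} have projectively isomorphic hyperplane sections. To turn this into the stated infinitude, parametrise the lines of $\Pi$ by the irreducible dual plane $\check\Pi\cong\P^2$. Grouping lines by the projective-isomorphism class of their section partitions $\check\Pi$, and each block is a constructible subset, being the image under projection of an incidence variety cut out by the condition that some element of $\operatorname{PGL}_2$ carry one divisor to the other. If there were only finitely many classes, then, $\check\Pi$ being irreducible of dimension $2$, one block $Z$ would have dimension $2$ and hence contain a dense open subset $U\subset\check\Pi$; but then all sections $\ell\cap C$ for $\ell\in U$ would be projectively isomorphic, contradicting the conclusion of \autoref{THM_PIHS_curves}. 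Hence there are infinitely many projective-isomorphism classes among the sections of $C$, and therefore among the line sections of $\hat X(m)$.

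The genericity verifications in the first two steps --- that $\hat X(m)$ is an integral degree-$d$ hypersurface and that its general plane section is an integral non-degenerate curve --- are technical but are essentially supplied by the codimension-of-singularities formula recalled in the text together with Bertini's theorems. I expect the main conceptual point to be the passage from \autoref{THM_PIHS_curves} to the desired infinitude: that theorem only forbids a \emph{dense} family of mutually isomorphic sections, so converting it into \emph{infinitely many} isomorphism classes relies on the irreducibility of the parameter space $\check\Pi$ together with the constructibility of the isomorphism classes, which forces a finite partition to have a dense block.
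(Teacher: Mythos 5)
Your proof is correct and takes essentially the same route as the paper's: both identify $\hat X(m)\subset\P^{d-1}$ as a degree-$d$ hypersurface whose singular locus has the expected codimension $3$ for general $m$, pass via Bertini to a general plane section $C\subset\P^2$, which is an integral plane curve of degree $d\geqslant 4$, and apply \autoref{THM_PIHS_curves} to conclude. The two points you elaborate --- integrality of $\hat X(m)$ via Serre's criterion, and the constructibility-plus-irreducibility argument that upgrades the negation of ``projectively isomorphic hyperplane sections'' (which a priori only rules out a dense family of mutually isomorphic line sections) to genuinely infinitely many isomorphism classes --- are precisely the details the paper's one-sentence conclusion leaves implicit, and you supply them correctly.
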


\begin{proof}
Consider a $d$-dimensional algebra $A=(V,m)$, $m\in V^*\otimes V^*\otimes V$, $\dim V = d\geqslant 4$. In this case, $\hat{X}(m)$ is a hypersurface of degree $d$, and for a general $m$, as it was mentioned before, its singular locus has the expected codimension 3. By repeatedly applying the Bertini theorem, we see that there is a smooth plane section by a $\P^2\subset\P^{d-1}$, which is a plane curve denoted by $C.$ The degree of this curve is $d\geqslant4$, so by \autoref{THM_PIHS_curves} it doesn't have projectively isomorphic hyperplane sections, which means that the original variety $\hat{X}(m)$ has infinitely many projectively non-isomorphic 1-dimensional plane sections.
\end{proof}

From the above propositions we get the 'if' part of \autoref{THM-main}:

\begin{Proposition}\label{PROP_main_d>3}
Let $A=(V,m)$ be a general $d-$dimensional $k-$algebra over an algebraically closed field $k$ of characteristic 0. If $d\geqslant 4$, there are infinitely many homotopes of $A$ up to isotopy.
\end{Proposition}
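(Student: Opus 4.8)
The plan is to deduce the statement directly from the three preceding propositions, the only genuinely new ingredient being a cone-recovery argument. By \autoref{PROP_not_projectively_isomorphic_sections}, for a general $(V,m)$ with $d\geqslant 4$ there is an infinite family of two-dimensional subspaces $W_i\subset V$, $i\in I$, such that the line sections $\P(W_i)\cap\hat X(m)$ are pairwise projectively non-isomorphic. For each $i$ I would choose a linear complement $V=W_i\oplus U_i$ with $\dim U_i=d-2$, and let $f_i\colon V\to V$ be the projection onto $W_i$ along $U_i$, so that $\Ima f_i=W_i$, $\Ker f_i=U_i$, and crucially $\Ima f_i\cap\Ker f_i=0$. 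Let $A_i=(V,m_i')$ be the homotope obtained from the triple $(f_i,\id,\id)$. Since $d\geqslant 4$ the kernel $U_i$ is nonzero, so none of these is an isotopy, and by \autoref{PROP_homotope_cone} the variety $\hat X(m_i')$ is projectively isomorphic to the cone with apex $\P(U_i)\cong\P^{d-3}$ and base $\P(W_i\cap X(m))=\P(W_i)\cap\hat X(m)$.

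The heart of the argument is then to upgrade ``projectively non-isomorphic bases'' to ``projectively non-isomorphic cones.'' For a general $m$ the $d$ points of $\P(W_i)\cap\hat X(m)$ are distinct and span the line $\P(W_i)$, so $\hat X(m_i')$ is a union of $d$ distinct hyperplanes through the common linear space $\P(U_i)$, and its vertex locus is exactly $\P(U_i)$. Consequently any projective isomorphism $\gamma$ between $\hat X(m_i')$ and $\hat X(m_j')$ must carry vertex locus to vertex locus, hence $\P(U_i)$ to $\P(U_j)$, and therefore descends to a projective isomorphism of the projection targets $\P(V/U_i)\to\P(V/U_j)$ taking the image of one base to the other. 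Because $W_i\cap U_i=0$, projection from $\P(U_i)$ restricts to an isomorphism $\P(W_i)\xrightarrow{\sim}\P(V/U_i)$ identifying the base with $\P(W_i)\cap\hat X(m)$; thus such a $\gamma$ would force the sections $\P(W_i)\cap\hat X(m)$ and $\P(W_j)\cap\hat X(m)$ to be projectively isomorphic. This contradicts the choice of the family, so the varieties $\hat X(m_i')$ are pairwise projectively non-isomorphic.

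Finally, I would invoke the contrapositive of \autoref{PROP_isotopic_isomorphic_determinantal}: if $A_i$ and $A_j$ were isotopic, then $\hat X(m_i')$ and $\hat X(m_j')$ would be projectively isomorphic, which we have just excluded. Hence the homotopes $\{A_i\}_{i\in I}$ are pairwise non-isotopic, and since $I$ is infinite, $A$ has infinitely many homotopes up to isotopy. The main obstacle is the cone-recovery step of the middle paragraph: one must argue cleanly that the vertex locus of each $\hat X(m_i')$ is intrinsic and of the expected dimension $d-3$, so that a projective isomorphism of cones genuinely induces a projective isomorphism of the bases; the transversality $\Ima f_i\cap\Ker f_i=0$ is exactly what makes the base reappear as the original line section under projection from the vertex.
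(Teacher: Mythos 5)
Your proposal is correct and follows the paper's overall skeleton --- rank-two projections $f_i$ with image $W_i$, \autoref{PROP_homotope_cone} to identify $\hat X(m_i')$ as a cone over the line section, and the contrapositive of \autoref{PROP_isotopic_isomorphic_determinantal} to conclude --- but you justify the crucial middle step (cones over projectively non-isomorphic bases are themselves projectively non-isomorphic) by a different mechanism. The paper observes that any section of the cone by a $\P^{s-1}$ (here $s=\dim\Ima f=2$) disjoint from the apex is projectively isomorphic to the base, so the base is the \emph{generic} line section of $\hat X(m_i')$, and projectively isomorphic varieties must have projectively isomorphic generic plane sections. You instead recover the base intrinsically: for general $m$ the cone is a union of $d$ distinct hyperplanes through $\P(U_i)$, its vertex locus is exactly $\P(U_i)$ (the intersection of any two distinct such hyperplanes already has dimension $d-3$, hence equals it), a projective isomorphism carries vertex locus to vertex locus and so descends to the $\P^1$ projection targets, and the transversality $W_i\cap U_i=0$ identifies the projected $d$-point configuration with the original section. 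Both arguments are valid. The paper's generic-section argument is shorter and needs no distinctness or reducedness hypothesis on the section (which is why it transfers verbatim to the trilinear-form setting of \autoref{THM-main-general}, where the analogous bases are positive-dimensional), while your vertex-recovery argument makes fully explicit what the paper compresses into the phrase ``an elementary argument,'' at the cost of a genericity assumption (the $d$ points distinct) that you correctly flag and that does hold for the sections furnished by \autoref{PROP_not_projectively_isomorphic_sections}. Incidentally, you silently repair a slip in the paper's proof, which asks for a ``rank 1 map $f$'' with $\P(\Ima f)$ a line; this of course requires rank $2$, exactly as in your construction.
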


\begin{proof}
By \autoref{PROP_not_projectively_isomorphic_sections}, $\hat{X}(m)$ has infinitely many projectively non-isomorphic plane sections by 1-dimensional subspaces. For any $\P^1\subset\P^{d-1}$ there is a rank 1 map $f$ such that $\P(\Ima f)$ is exactly said $\P^1\in\P^{d-1}$. By varying $f$, we obtain an infinite family of homotopes $A=(V,m')$ of $A=(V,m)$ with their corresponding varieties being cones over projectively non-isomorphic varieties. 

An elementary argument shows that such cones are themselves projectively non-isomorphic. If $s$ is the dimension of $\Ima f$, then any plane section of $\hat{X}(m')$ by a dimension $s-1$ subspace $\P^{s-1}\subset \P(V)$ disjoint from the apex $\P(\Ker f)\subset \P(V)$  is projectively isomorphic to $\P(\Ima f\cap X(m)).$ Since this condition holds generally, $\P(\Ima f\cap X(m))$ is the generic $(s-1)$-dimensional plane section of $\hat{X}(m')$. 

Since projectively isomorphic varieties must have projectively isomorphic generic plane sections, the determinantal varieties $X(m')$ of our homotopes are themselves projectively non-isomorphic. Since isotopic algebras have isomorphic corresponding determinantal varieties, there are infinitely many pairwise non-isomorphic homotopes in the infinite family we constructed above.

\end{proof}

Now we consider the case $d<4.$ The case $d\leqslant 1$ is trivial. In case $d=2$, it is known that there are only 7 orbits of the action of $\G=\text{GL}(2)^{\times 3}$ on $V^*\otimes V^*\otimes V$ \cite{Parfenov_2001}, so there are only finitely many $2-$dimensional algebras up to isotopy. The only remaining case is $d=3$.
\begin{Proposition}\label{PROP_d=3}
Let $A=(V,m)$ be a 3-dimensional $k-$algebra. Then it has only finitely many homotopes up to isotopy.
\end{Proposition}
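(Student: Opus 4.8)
The plan is to show that for a $3$-dimensional algebra, the homotopes fall into only finitely many isotopy classes by studying the determinantal variety $\hat{X}(m)$ and its behavior under the cone construction of \autoref{PROP_homotope_cone}. For $d=3$, the variety $\hat X(m)\subset\P^2$ is a cubic curve (or all of $\P^2$). The key difference from the $d\geqslant 4$ case is dimensional: a homotope obtained from a rank $1$ map $f$ gives a cone whose base is $\P(\Ima f\cap X(m))$, a section of $\hat X(m)$ by a line $\P^1\subset\P^2$. A line meets a plane cubic in at most $3$ points (or lies inside it), so the base of the cone is a $0$-dimensional scheme of degree $\leqslant 3$, and such configurations of points in $\P^1$ have only finitely many projective-equivalence types. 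I would exploit this to bound the number of isotopy classes.

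First I would reduce to understanding which homotopes can arise. By \autoref{PROP_homotope_cone}, applying $(f,\id,\id)$ produces a variety $\hat X(m')$ that is a cone with apex $\P(\Ker f)$ over the base $\P(\Ima f\cap X(m))$. The isotopy class of the homotope is an invariant coarser than the projective-isomorphism class of $\hat X(m')$ (by \autoref{PROP_isotopic_isomorphic_determinantal} and the translation between isotopy and the $\G$-action), so it suffices to bound the number of projective-isomorphism types of these cones as $f$ ranges over all linear maps $V\to V$ and, more generally, as one applies triples $(f_1,f_2,\id)$. I would organize the argument by the rank of $f$: rank $3$ gives an isotopy (nothing new); rank $1$ and rank $2$ give cones over sections of $\hat X(m)$ by a point, a line, or the whole plane. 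In each case the base is a subscheme of $\P^2$ cut out by a linear space, hence has degree at most $\deg\hat X(m)\leqslant 3$.

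The heart of the matter is then a finiteness statement for low-degree configurations. A degree $\leqslant 3$ subscheme of a line $\P^1$, or the intersection of a plane cubic with a general line, is projectively rigid in the sense that there are only finitely many orbits under the relevant projective group: up to $\mathrm{PGL}_2$ there are finitely many arrangements of at most three points on $\P^1$ (counting multiplicities and incidences), and correspondingly finitely many cone types. I would then argue that the full homotope operation, allowing both $f_1$ and $f_2$ to be non-invertible, still only produces determinantal varieties that are cones over such bounded-degree, low-dimensional bases, so the total number of projective-isomorphism classes of $\hat X(m')$ is finite. Finally, since isotopy of homotopes is implied by projective isomorphism of determinantal varieties being refined — more precisely, since isotopic homotopes have projectively isomorphic determinantal varieties and there are only finitely many such varieties — one concludes there are finitely many homotopes up to isotopy.

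The main obstacle I expect is that, unlike the clean curve-section dichotomy of \autoref{THM_PIHS_curves} used for $d\geqslant 4$, the $d=3$ argument must handle \emph{all} ranks of $f_1,f_2$ simultaneously and cannot simply invoke a classification of varieties with projectively isomorphic hyperplane sections. In particular, one must rule out that the continuous moduli of triples $(f_1,f_2)$ produce a continuous family of non-isotopic homotopes; the resolution is precisely that the resulting bases live in dimension $0$ (sections of a curve by linear spaces), where projective moduli are discrete. Care is also needed when $\hat X(m)$ is not a reduced irreducible cubic or degenerates to all of $\P^2$, and when the generic section fails to be in general position; these degenerate configurations must be checked to still yield only finitely many cone types, but each such stratum contributes only finitely many classes, so the conclusion survives.
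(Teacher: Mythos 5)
Your plan has a fatal logical gap: the determinantal variety is a one-way invariant, and you use it in the wrong direction. \autoref{PROP_isotopic_isomorphic_determinantal} says only that isotopic tensors have projectively isomorphic varieties $\hat{X}(m)$; this makes the variety suitable for proving \emph{infinitude} of isotopy classes (as in the $d\geqslant 4$ case, where non-isomorphic varieties certify non-isotopic homotopes), but it can never prove \emph{finiteness} unless you also show the invariant is complete on the relevant family of homotopes. You assert the converse when you write that "the isotopy class of the homotope is an invariant coarser than the projective-isomorphism class of $\hat X(m')$" and conclude that "it suffices to bound the number of projective-isomorphism types of these cones" --- but nothing in the paper (or in your argument) gives that converse, and it is exactly where the difficulty lies. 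Once $f$ has rank $\leqslant 2$, the variety $\hat X(m')$ degenerates to a cone over at most three points, an extremely lossy invariant: your argument does nothing to rule out a continuous family of pairwise non-isotopic homotopes all sharing the same (projectively isomorphic) degenerate cone. Your final inference --- finitely many values of the invariant, hence finitely many isotopy classes --- is a non sequitur for a non-complete invariant, and your proposed "resolution" of the continuous-moduli worry (that the cone bases are zero-dimensional) addresses only the moduli of the varieties, not the moduli of the tensors. There is also a secondary problem: the cone description of \autoref{PROP_homotope_cone} applies only to a map inserted in the slot along which $m$ is read as a linear map; a non-invertible $f_3$ acting on the output slot gives $m'=f_3\circ m$, whose determinantal locus is the preimage of a rank condition twisted by $f_3$, not a linear section or cone of $X(m)$, so even your "finitely many variety types" claim is unproved for general triples.

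The paper's actual proof avoids the determinantal variety entirely and works directly with the tensors. It first reduces an arbitrary triple to the three elementary homotopies $(f_1,\id,\id)$, $(\id,f_2,\id)$, $(\id,\id,f_3)$ applied consecutively. For a non-invertible $f_1$ it factors $f_1=i\circ\pi$ through a fixed $2$-dimensional space $W$, so that $m\circ i$ is a tensor of type $(2,3,3)$; by Parfenov's classification \cite{Parfenov_2001} there are only finitely many such tensors up to the action of $\mathrm{GL}(W^*)\times\mathrm{GL}(V^*)\times\mathrm{GL}(V)$, and since $\pi$ is surjective every automorphism of $W$ lifts to an automorphism of $V$, which converts orbit finiteness of $m\circ i$ into finiteness of the homotopes $m\circ i\circ\pi$ up to isotopy; the cases of $f_2$ and $f_3$ are handled symmetrically. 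This finiteness theorem for $(2,3,3)$ tensors is precisely the completeness-type input your variety-based approach lacks; to rescue your route you would have to prove that on the degenerate stratum of cone-type varieties the determinantal invariant separates isotopy classes, which is a substantially harder statement than the direct tensor argument.
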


\begin{proof}
First we note that we may construct a homotope using a triple $(f_1,f_2,f_3)$ by consecutively constructing homotopes using the triples $(f_1,\id,\id)$ and $(\id,f_2,\id)$ and $(\id,\id,f_3)$. It will be sufficient that up to isotopy there are finitely many homotopes with only one of the maps $f_i$ is not the identity. We first consider the case $(f_1,\id,\id)$. If $f_1$ is invertible there is nothing to prove. Assume now that the map $f_1$ is not invertible, so its rank is not greater than 2. Consider $m$ as a map from the first term in $V^*\otimes V^*\otimes V$, i.e. an element of $\Hom(V, V^*\otimes V).$ This way the multiplication law of the homotopic algebra $m'$ becomes the linear map $m'\in \Hom(V, \Hom(V,V))$, $m'=m\circ f_1$.

We can write $f_1$ as the composition $i\circ \pi:$

\[V\xrightarrow{\pi} W\xrightarrow{i} V,\] where $\text{rank } i=\text{rank } f_1$, $\text{dim }W=2$ and $\pi$ is a surjective map fixed once and for all for all choices of $f_1$. 
Then we can consider $m\circ i$ as an element of $W^*\otimes V^*\otimes V$, i.e. a tensor of type (2,3,3).  It was shown in \cite{Parfenov_2001} that there are only finitely many tensors of this type up to the action of $\text{GL}(W^*)\times\text{GL}(V^*)\times\text{GL}(V)$. Here $h^*\in\text{GL}(W^*)$ acts by composition with $h$ on the right.  Notice that since $\pi$ is surjective, for any automorphism $h$ of $W$ there is an automorphism $g$ of $V$ such that $h\circ\pi=\pi\circ g$. In other words, the action of $h^*\in\text{GL}(W^*)$ on $\tilde{m}\circ i$ in the composition $m\circ i\circ \pi$ may be realized through the action of some $g^*\in\text{GL}(V^*)$ on $m'\circ i\circ\pi. $ From this we have that for any representative $\hat{m}$ of the orbit of $m\circ i$ in $V^*\otimes W^*\otimes V$ we have $m'=m\circ f_1=m\circ i\circ \pi \sim \hat{m}\circ \pi.$ It follows that the the number of homotopes of $m$ up to isotopy in this case is no more than the number of tensors of type (2,3,3) up to isotopy, which is finite. 

The argument for the case $(\id,f_2,\id)$ is identical. For the case $(\id,\id,f_3)$ we consider $m$ and $m'$ as a elements of $\Hom(V\otimes V, V)$. Then we have that $m'=f_3\circ m,$ and we factorise $f_3$ as $i\circ \pi:$

\[V\xrightarrow{\pi} L\xrightarrow{i} V,\]
now with $\text{dim } L=2$, $i$ injective and $\text{rank } \pi=\text{rank } f_3.$ We then proceed analogously, lifting an automorphism of $L$ to an automorphism of $V.$
\end{proof}

\subsection{Generalization: homotopes of trilinear forms.}

The result of \autoref{THM-main} may be generalized to the case of trilinear forms, i.e. tensors in $V_1\otimes V_2\otimes V_3$ for a general triple of $k$-vector spaces $V_i,\ i=1,2,3$. Given a triple of linear maps ${\bf f}=(f_1,f_2,f_3)$ $f_i: V_i\to V_i,\ i=1,2,3,$ and $m =\sum_s v_{1,s}\otimes v_{2,s}\otimes v_{3,s}\in V_1\otimes V_2\otimes V_3,$ let ${\bf f}\cdot m=\sum_s f_1(v_{1,s})\otimes f_2(v_{2,s})\otimes f_3(v_{3,s}).$ We say that ${\bf f}\cdot m$ is a homotope of $m$ and $f$ is a \textit{homotopy}. Note that in the particular case when $m$ is a multiplication law of an algebra $A$, i.e. $m\in V^*\otimes V^*\otimes V,$ the homotope ${\bf f}\cdot m$ will be the multiplication law of the homotope of $A$ with multiplication defined by $a\times a'=f_3(f_1^*(a)\cdot f_2^*(a)),$ which motivates our choice of terminology. We may now ask whether there are generally infinitely many homotopic tensors up to isotopy. We shall now prove that this is indeed the case in sufficiently many dimensions.

\begin{Theorem}\label{THM-main-general}
Let $V_i,\ i=1,2,3$ be vector spaces over $k$, an algebraically closed field of characteristic 0. Let $d_i=\dim V_i$ and assume, without loss of generality, that $d_1\geqslant d_2\geqslant d_3$. Let  $m$ be a general element of $V_1\otimes V_2\otimes V_3$. Then there are infinitely many tensors homotopic to $m$ up to isotopy if and only if either $d_1\geqslant 4$ and $d_3\geqslant 3$, or $d_1\geqslant5,\ d_2\geqslant 4,\ d_3=2$.  
\end{Theorem}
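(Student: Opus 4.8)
The plan is to mimic the strategy of Theorem~\ref{THM-main}, reducing the question about homotopes up to isotopy to one about projective cones over linear sections of the determinantal variety $\hat{X}(m)$, and then invoking the classification of varieties with projectively isomorphic hyperplane sections. The point of departure is Proposition~\ref{PROP_homotope_cone}: applying a rank-$1$ map $f_1$ in the first slot produces a homotope whose determinantal variety is a cone over $\P(\Ima f_1\cap X(m))$, a linear section of $\hat{X}(m)$ cut out inside $\P(V_1)$. Since isotopic tensors have projectively isomorphic determinantal varieties (Proposition~\ref{PROP_isotopic_isomorphic_determinantal}), it suffices to exhibit infinitely many projectively non-isomorphic such sections, and by the cone argument of Proposition~\ref{PROP_main_d>3} this reduces to showing that the generic linear section of $\hat{X}(m)$ (by subspaces of the appropriate dimension) is not projectively isomorphic to most of its neighbors.

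\textbf{Setting up the determinantal variety.} I would first compute, for general $m\in V_1\otimes V_2\otimes V_3$, the dimension and degree of $\hat{X}(m)\subset\P(V_1^*)\cong\P^{d_1-1}$, where $\hat{X}(m)$ is the locus where $m(\cdot)\colon V_1^*\to V_2\otimes V_3$ lands in rank-$\leqslant(d_3-1)$ tensors (here $l=\min(d_2,d_3)-1=d_3-1$). The variety $\mathcal{D}_{d_3-1}(V_2,V_3)\subset V_2\otimes V_3$ has codimension $(d_2-(d_3-1))(d_3-(d_3-1))=d_2-d_3+1$ and degree given by the standard Giambelli--Thom--Porteous formula. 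For general $m$ the section $\hat{X}(m)$ has the expected codimension $d_2-d_3+1$ in $\P^{d_1-1}$, its singular locus has the expected codimension (pulled back from $\mathcal{D}_{d_3-2}$), and its degree equals the degree of the generic determinantal variety. The two regimes in the statement, namely $d_1\geqslant 4,\ d_3\geqslant 3$ and the exceptional strip $d_1\geqslant 5,\ d_2\geqslant 4,\ d_3=2$, should correspond exactly to when $\hat{X}(m)$ is positive-dimensional and admits a smooth curve section of degree exceeding the allowed bound in Theorem~\ref{THM_PIHS_curves}, or when a suitable surface section falls outside Pardini's list.

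\textbf{Producing the sections.} For the ``if'' direction I would repeatedly apply Bertini, as in Proposition~\ref{PROP_not_projectively_isomorphic_sections}, to slice $\hat{X}(m)$ down to a smooth integral non-degenerate curve $C\subset\P^n$ living in a linear subspace, and then check that $\deg C > n+1$, so that Theorem~\ref{THM_PIHS_curves} forbids projectively isomorphic hyperplane sections; this yields infinitely many projectively non-isomorphic line sections of that curve, hence of $\hat{X}(m)$, and by the cone construction infinitely many non-isotopic homotopes. When $d_3=2$ the variety $\hat{X}(m)$ is itself a hypersurface (since $d_2-d_3+1=d_2-1$ forces a different count), so the relevant section to control may be a surface rather than a curve, and here I would lean on Pardini's Theorem~2.12 instead: I must verify the generic surface section is none of the seven exceptional types (cone, tangent surface of a twisted cubic, Veronese and its projections, rational ruled surfaces), which is where the precise bounds $d_1\geqslant 5,\ d_2\geqslant 4$ enter. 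For the ``only if'' direction I would argue, paralleling the finiteness Propositions~\ref{PROP_d=3} and the $d=2$ orbit count, that in the complementary range the determinantal variety is either empty, a linear space, or of small enough degree/dimension that all its generic sections \emph{are} projectively isomorphic (equivalently, that the number of $\G$-orbits is finite by the cited classification of small tensors), forcing finitely many homotopes.

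\textbf{The main obstacle.} The genuinely hard part is the boundary bookkeeping: pinning down exactly for which $(d_1,d_2,d_3)$ the generic section of $\hat{X}(m)$ has degree and codimension placing it outside the Ballico--L'vovsky and Pardini lists, and conversely confirming finiteness in every excluded triple. In particular the exceptional strip $d_3=2,\ d_2\geqslant 4,\ d_1\geqslant 5$ requires a genuinely two-dimensional analysis via Pardini rather than the clean curve argument, and the nearby cases ($d_3=2$ with $d_2=3$, or $d_1=4$) must be shown to be finite, which likely forces appeal to the explicit small-dimension tensor classifications of \cite{Parfenov_2001}. Ensuring that the expected-dimension and expected-smoothness claims for general $m$ actually hold — so that Bertini applies and the singular locus does not swallow the section — is the technical crux underpinning every case.
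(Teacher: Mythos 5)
Your plan breaks down at exactly the point where the paper itself has to abandon the determinantal-section machinery: the exceptional strip $d_3=2$, $d_2\geqslant 4$, $d_1\geqslant 5$. For a general $m$ of format $(d_1,d_2,2)$ the variety $\hat{X}(m)\subset\P^{d_1-1}$ has dimension $d_1-d_2$ and degree $\binom{d_2}{1}=d_2$, hence codimension $d_2-1$ --- so it is \emph{not} a hypersurface as you assert (it is zero-dimensional already when $d_1=d_2$), and, worse, its degree equals its codimension plus one: it is a variety of minimal degree. Its generic curve sections are rational normal curves, which satisfy $\deg C\leqslant n+1$ and therefore \emph{do} have projectively isomorphic hyperplane sections by \autoref{THM_PIHS_curves}, and its surface sections are scrolls, which sit squarely inside Pardini's list. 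So ``verifying the generic surface section is none of the seven exceptional types'' cannot succeed --- the sections are of exceptional type, and the invariant sees nothing in this regime. The paper instead switches invariants entirely: it reduces to format $(5,4,2)$ via projections $(p_1,p_2,\id)$, writes $m$ as a pencil of $5\times 4$ matrices $(A,B)$ normalized so that $A=\left(\begin{smallmatrix} I_4\\ 0\end{smallmatrix}\right)$, $B=\left(\begin{smallmatrix} \hat{B}\\ b\end{smallmatrix}\right)$ with $\hat{B}$ diagonal, applies the non-invertible homotopy $G_1=\left(\begin{smallmatrix} I_4 & u\\ 0 & 0\end{smallmatrix}\right)$, and studies the degeneracy locus $\det\bigl(xI_4+y(\hat{B}+ub)\bigr)=0$ in $\P^1=\P(V_3^*)$: a configuration of four points whose projective class (cross-ratio) is an isotopy invariant. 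A derivative computation in $u$ shows the resulting binary quartics fill a dense subset of $\P^4$, yielding infinitely many non-isotopic homotopes. Nothing in your proposal supplies a substitute for this step.

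There is a second, quieter failure in your ``if'' direction at $d_2=d_3=3$: the generic curve section there is a plane cubic, of degree $\binom{3}{2}=3=n+1$ with $n=2$, so your criterion $\deg C>n+1$ is not met and \autoref{THM_PIHS_curves} is silent. The paper handles this by a separate two-dimensional argument that you assign only to the (hopeless) $d_3=2$ strip: the generic $\P^3$-section is a smooth cubic surface $S$, realized as the blow-up of $\P^2$ at $6$ of $8$ general points, and projectively isomorphic hyperplane sections through a general line would force all members of a general pencil of plane cubics through $8$ points to be isomorphic --- a contradiction. Finally, in your ``only if'' sketch the parenthetical equating ``all generic sections projectively isomorphic'' with ``finitely many $\G$-orbits'' is a non sequitur, since the determinantal variety is only a one-way invariant; the paper's actual finiteness argument observes that when both inequality regimes fail, each reduced triple $(d_1-1,d_2,d_3)$, $(d_1,d_2-1,d_3)$, $(d_1,d_2,d_3-1)$ lies in the finite-orbit list of \cite{Parfenov_2001} and then repeats the factorization argument of \autoref{PROP_d=3}; your appeal to that proposition and to Parfenov points in the right direction, so this part is repairable, but the two gaps above are genuine missing ideas rather than bookkeeping.
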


\begin{proof}
The proof goes along the lines of the proofs of the previous section. 

We begin with the 'only if' direction. It is known that there are only finitely many tensors of type $(d_1,d_2,d_3)$ up to isotopy if and only if $(d_1,d_2,d_3)$ is  of the form $(n,m,1),\ n,m\in\mathbb{N},$ or $(n,m,2)$, $n\in\mathbb{N}, m\in\{2,3\}.$ It is easy to see that if the triple $(d_1,d_2,d_3)$ does not satisfy the inequalities in the statement, then each one of the triples $(d_1-1,d_2,d_3),$ $(d_1,d_2-1,d_3) $ and $(d_1,d_2,d_3-1)$  is of this form. Then we can proceed exactly as in the proof of \autoref{PROP_d=3}, where we used the finiteness of tensors of tensors of type $(2,3,3).$ 

Now we move on to the 'if' direction. When $d_3\geqslant 3$, we can apply the method of the previous section. The proof of \autoref{PROP_homotope_cone} repeated verbatim shows that for homotopies of the form ${\bf f}=(f,\id,\id)$ and $m'={\bf f}\cdot m$ $\hat{X}(m')$ is projectively isomorphic to a projective cone with basis $\P(\Ima f\cap X(m))\subset\P(\Ima f)\subset \P(V)$ and apex $\P(\Ker f)\subset \P(V).$ We will now prove a statement analogous to \autoref{PROP_not_projectively_isomorphic_sections}: for a general $m$ the projective variety $\hat{X}(m)\subset \P(V_1^*)$ has infinitely many projectively non-isomorphic plane sections by linear subspaces $L\subset\P(V_1^*)$, which will imply that there are infinitely many homotopes up to isotopy.

Assume first that $d_1\leqslant d_2d_3$. In this situation, for a general $m$ the corresponding linear map $m: V_1^*\to V_2\otimes V_3$ is injective, $\dim \hat{X}(m)=d_1-d_2+d_3-2,$ and $\dim \text{Sing}(\hat{X}(m))=d_1-2d_2+2d_3-5.$ We proceed case by case:

\begin{case} $d_2\geqslant4.$
 For a general linear subspace $\P^{2+d_2-d_3}\subset\P(V_1^*)$ and $C=\hat{X}(m)\cap \P^{2+d_2-d_3},$ \[\dim C=(d_1-d_2+d_3-2)+(2+d_2-d_3)-(d_1-1)=1\] and $C$ is irreducible and nonsingular.

The degree of $\hat{X}(m)$, and hence the degree of $C$, is $\binom{d_2}{d_3-1}$ (cf. \cite{HARRIS198471}). Let $r=d_2-d_3.$ $C$ is a curve in $\P^{2+r},$ so according to \autoref{THM_PIHS_curves}, if $\binom{d_2}{d_3-1}=\binom{d_2}{r+1}>3+r$ the curve $C$ does not have projectively isomorphic hyperplane sections.

If $r=0$, $\deg C=d_2\geqslant 4>3+r$. If $r>0,$ $\binom{d_2}{r+1}\geqslant \binom{r+3}{r+1}=(r+3)\left(1+\frac{r}{2}\right)>3+r$. In both cases $C$ does not have projectively isomorphic hyperplane sections.
\end{case}

\begin{case} $d_2=d_3=3.$
In this case,  $\dim \hat{X}(m)=d_1-2,$ and $\dim \text{Sing}(\hat{X}(m))=d_1-5.$ Then for a general linear subspace $\P^{3}\subset\P(V_1^*)$ and $S=\hat{X}(m)\cap \P^{3}$ \[\dim S=(d_1-d_2+d_3-2)+(2+d_2-d_3)-(d_1-1)=2,\] and $S$ is irreducible and nonsingular.

The degree of $\hat{X}(m)$, and hence the degree of $C$, is $\binom{d_2}{d_3-1}=3$, so $S$ is a smooth cubic hypersurface. We will now show that a general smooth cubic hypersurface in $P^3$ does not have projectively isomorphic hyperplane sections. Such a cubic is isomorphic to a blowup of $P^2$ at 6 general points. Choose 8 points of $P^2$ in general position. There is a pencil of cubic curves passing through these 8 points. Blow up 6 of these points to obtain a cubic surface $S\subset P^3$ and let $l$ be the line passing through the two remaining points. Let $C_1,\ldots, C_6$ be the 6 exceptional curves and $\pi:S\to\P^2$ the blow-up morphism.

 Suppose $S$ has projectively isomorphic hyperplane sections by hyperplanes $L\subset P^3.$ By dimension count, we see that for a general line $l\in P^3$ not lying on $S$ the sections by a general plane $L$ passing through $l$ are projectively isomorphic. Then all of these sections must intersect $C_1,\ldots, C_6$ and pass through the 3 points of intersection of $l$ with $S$. Applying $\pi$ then gives us a pencil of cubic curves passing through 9 points, which is exactly the pencil we began with. But for a general such pencil of cubic curves through 8 points, the members of the pencil are generally non-isomorphic, which contradicts our assumption that $S$ has projectively isomorphic hyperplane sections by planes $L\in P^3$.

\end{case}
Finally, let $d_1>d_2d_3.$ In this case for a general $m$ the corresponding linear map $m: V_1^*\to V_2\otimes V_3$ is of maximal rank $d_2d_3$, and hence there is a subspace $W\subset V_1^*$ of dimension $d_2d_3$ with the inclusion map $i: W\to V_1^* $ such that the composition $m\circ i$ is injective. Then the above argument works with $\hat{X}(m)$ replaced by its pullback along $i$. But this pullback is nothing more than a linear section of $\hat{X}(m)$ by a linear subspace of dimension $d_2d_3,$ so in this case  $\hat{X}(m)$ also has infinitely many projectively non-isomorphic sections by linear subspaces of dimension 1.

Only the case $d_1\geqslant5,\ d_2\geqslant 4,\ d_3=2$ remains. In this situation, the same method does not seem to apply. For instance, in the case $d_1=d_2$, the variety $\hat{X}(m)$ is zero-dimensional and there are no interesting hyperplane sections to speak of. Instead, we proceed directly.

First we reduce to the case $(d_1,d_2,d_3)=(5,4,2).$ Fix some subspaces $W_1\subset V_1,\ W_2\subset V_2,\ \dim W_1=5,\ \dim W_2=4$ and projections $p_i:V_i\to W_i.$ Assume we have already shown that there is an open set $U\in W_1\otimes W_2\otimes V_3$ such that for $m\in U$ there are infinitely many homotopes up to isotopy. Then the triple $(p_1,p_2,\id)$ is also a homotopy and, as a consequence, there are infinitely many homotopes up to isotopy for tensors in $(p_1\otimes p_2\otimes \id)^{-1}U$, which is an open subset of$V_1\otimes V_2\otimes V_3.$

Now let $(d_1,d_2,d_3)=(5,4,2).$ After fixing the bases of $V_i$, the tensor $m$ can be viewed as a pair of $5\times 4$ matrices $(A,B)$. Then for $G_i\in \text{GL}(V_i)$ the action takes the form $G_1\cdot(A,B)=(G_1A,G_1B), $ $G_2\cdot(A,B)=(AG_2^T,BG_2^T)$   and $G_3\cdot(A,B)=(\alpha A+\beta B, \gamma A +\delta D),$ where $G_3=\left(\begin{smallmatrix} \alpha\ \beta\\ \gamma\ \delta\end{smallmatrix}\right).$ Applying a homotopy then corresponds to applying the same formulas for not necessarily invertible matrices $G_i\in \Mat_i(k).$

For a generic tensor, both $A$ and $B$ are full-rank. Then, possibly after applying some isotopy, we may assume that this pair of matrices has the form

\[A=\left(
\begin{BMAT}(b){c}{c|c}
I_4\\ 0\end{BMAT}
\right)
,\ B=
\left(
\begin{BMAT}(b){c}{c|c}
\hat{B}\\b\end{BMAT}
\right),
\] where $I_4$ is the identity matrix of rank $d_2$, $b$ is a row vector of size 4, and $\hat{B}$ is a $4\times 4$ diagonal matrix with the $i$-th diagonal entry $\hat{B}_i$.

Define $G_1\in \text{Mat}_{5}(k)$ as

\[
G_1=
\left(
\begin{BMAT}(b){c|c}{c|c}
I_4 & u\\
0 & 0\end{BMAT}
\right),\] where $u$ is a column vector of size 4. Apply to $m$ the homotopy $(g_1,\id,\id)$, where $g_i$ is represented by the matrix $G_i$ in our chosen bases. Let $m'$ denote the homotopic tensor and let $(A'|B')$ be $m'$ written as a pair of matrices. Then

\[A'=G_1A=\left(
\begin{BMAT}(b){c}{c|c}
I_4 \\
0  \end{BMAT}
\right),\
B'=G_1B=\left(
\begin{BMAT}(b){c}{c|c}
\hat{B} + ub\\
0\end{BMAT}
\right).
\]

Consider the subvariety of $\P^1$ with homogeneous coordinates $[x:y]$ defined by the inequality $\text{rank }(xA'+yB')<4.$ It is clearly preserved up to projective isomorphism by the group action. It can also be defined by the equation $\det (xI_4+y(\hat{B}+ub^T))=0,$ and it consists of four points, counted with multiplicities.  We will now show that for a general matrix $B$ by varying $u$ we may obtain infinitely many configurations of 4 points.

Let $u_i,b^i,\ 1\leqslant i\leqslant 4$, be the entries of vectors $u,\ b$, and consider $\det (xI_4+y(\hat{B}+ub^T))=0,$ as a function of $u_i$. We find its derivative with respect to $u_i$ at the origin:

\[\left[\frac{\partial}{\partial u_i}\det (xI_4+y(\hat{B}+ub))\right](0)=yb^i\prod_{j\neq i}(x+y\hat{B}_{jj}).\]

A straightforward check shows that for a general $B$ (and hence $\hat{B}$ and $b$) these polynomials for $i\in\{1,2,3,4\}$ are linearly independent, so the image of the map $u\longmapsto \det (xI_4+y(\hat{B}+bu^T))$ has dimension 4. But the coefficient of $x^4$ in $\det (xI_4+y(\hat{B}+bu^T))$ is 1, so after passing to the projectivization we obtain a dense set of $\P^4$, the projective space of polynomials of degree 4 in two variables. That means that for fixed (but general) $B$ and $b$ a general degree 4 subvariety of $\P^1$ is defined by an equation of the form $\det (xI_4+y(\hat{B}+bu^T))=0$. Since such subvarieties are generally not projectively isomorphic, we have that by varying $u$ we obtain an infinite family of non-isotopic tensors $m'$. \end{proof}

\section{Isomorphism classes of $\Delta-$homotopes.}
\label{Delta_homotopes}

Now we turn our attention to $\Delta-$homotopes. In this section, we do not provide any results for a general algebra. The reason why we can't apply the methods of the previous sections in this setting is that while previously we had total freedom in choosing a homotopy, whether a given homotopy gives a $\Delta-$homotope or not heavily depends on the multiplication law of the original $k-$algebra. We also consider isomorphism classes instead of isotopy classes, because the unital associative case will be the most interesting for us, and in this case, the unity-preserving isotopies are exactly the isomorphisms. In other words, the question we will be concerned with in this section is as follows.

\begin{Question}\label{QST_delta_homotopes}
For a fixed finite-dimensional (non-associative, unital associative) $k$-algebra $A$, can there be infinitely many (left) $\Delta-$homotopes up to isomorphism? 
\end{Question}

\subsection{The non-associative case.}
It turns out that for the case of non-associative algebras, already in dimension 2 the answer to \autoref{QST_delta_homotopes} is positive.

Let $A$ be a two-dimensional $k$-algebra with the basis $\{e_1,e_2\}$ and the multiplication law 
$$e_1e_1=e_1,\ e_2e_1=e_2,\ e_1e_2=e_1,\ e_2e_2=e_1.$$ Let $\Delta(\lambda)=e_1+\lambda e_2,\ \lambda \in k$, and consider the left homotopes $B_\lambda=A_{\Delta(\lambda)}$. Then we claim that the algebras $B_\lambda$ are pairwise nonisomorphic for different $\lambda\in k$.
\begin{Theorem}
The 2-dimensional non-associative algebras $B_\lambda$ are pairwise nonisomorphic for different $\lambda\in k$.
\end{Theorem}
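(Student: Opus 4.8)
The plan is to distinguish the $B_\lambda$ by the configuration of their idempotents, which is an isomorphism invariant. First I would record the multiplication table of $B_\lambda$. Writing the product as $a\times a'=(a\cdot\Delta(\lambda))\cdot a'$ and using the table for $A$, a direct computation gives
\[
e_1\times e_1=e_1\times e_2=e_2\times e_2=(1+\lambda)e_1,\qquad e_2\times e_1=\lambda e_1+e_2 .
\]
An isomorphism $\phi\colon B_\lambda\to B_\mu$ is a linear bijection with $\phi(a\times_\lambda b)=\phi(a)\times_\mu\phi(b)$; in particular it sends idempotents to idempotents, the right identity to the right identity, and a left-annihilating pair to a left-annihilating pair. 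So the first real step is to solve $a\times a=a$.

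Solving $a\times a=a$ for $a=xe_1+ye_2$ reduces to $xy=y$ together with a quadratic in the remaining variable whose discriminant turns out to be identically $1$. For $\lambda\notin\{0,-1\}$ this yields exactly three nonzero idempotents,
\[
u_1=\tfrac{1}{1+\lambda}e_1,\qquad u_2=e_1-e_2,\qquad u_3=e_1-\tfrac{\lambda}{1+\lambda}e_2 ,
\]
and they obey the linear relation $u_3=u_1+\tfrac{\lambda}{1+\lambda}u_2$. I would dispose of the two degenerate parameters first, purely by counting invariants: $B_{-1}$ has a one-dimensional space of products and its square-zero elements form a union of two lines, $B_0$ has only two nonzero idempotents and no nonzero square-zero elements, whereas every other $B_\lambda$ has exactly three nonzero idempotents. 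Hence $B_0$ and $B_{-1}$ are isomorphic to no other member of the family, and it remains to compare $B_\lambda,B_\mu$ for $\lambda,\mu\notin\{0,-1\}$.

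For these generic values the crux is that the three idempotents are rigidly labelled by $\phi$. I would check that $u_3$ is the unique right identity of $B_\lambda$, so necessarily $\phi(u_3)=u_3'$ (primes denoting the idempotents of $B_\mu$); and that the remaining two are distinguished by the asymmetry $u_1\times u_2=0\neq u_2\times u_1$, which forces $\phi(u_1)=u_1'$ and $\phi(u_2)=u_2'$. Since $u_1,u_2$ form a basis of $V$ and $\phi$ is linear, applying $\phi$ to $u_3=u_1+\tfrac{\lambda}{1+\lambda}u_2$ and comparing with $u_3'=u_1'+\tfrac{\mu}{1+\mu}u_2'$ gives $\tfrac{\lambda}{1+\lambda}=\tfrac{\mu}{1+\mu}$, i.e. $\lambda=\mu$. (Alternatively, the swapped labelling $\phi(u_1)=u_2',\ \phi(u_2)=u_1'$ can be excluded directly, since it would force the impossible identity $\tfrac{\lambda}{1+\lambda}=1$.) This contradicts $\lambda\neq\mu$ and proves the theorem.

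I expect the rigidity step to be the main obstacle. A priori an isomorphism is free to permute the three idempotents, and the bare linear relation among them is only an invariant up to the resulting $S_3$-action; the statement $\lambda=\mu$ emerges only after one isolates enough canonical structure---here the unique right identity together with the non-symmetric annihilation pattern of the other two idempotents---to pin the correspondence down completely. The only other care needed is the separate bookkeeping at $\lambda\in\{0,-1\}$, where the idempotent configuration degenerates and the counting invariants above must be used instead.
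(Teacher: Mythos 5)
Your proof is correct, and it shares the paper's basic strategy---everything is read off from the configuration of nonzero idempotents---but the mechanism you use to extract $\lambda$ is genuinely different. The paper separates $B_{-1}$ by its unique nonzero idempotent and then, for $\lambda\neq -1$, writes down the commutators $[x_1,x_2]$, $[x_1,x_3]$, $[x_2,x_3]$, each a multiple of $x_1$ with scalar depending on $\lambda$, and argues that the solvability of the system (idempotency plus two commutator identities) is an isomorphism invariant holding in $B_\lambda$ but in no $B_\mu$ with $\mu\neq\lambda$. You instead rigidify the three idempotents---$u_3$ as the unique right identity, $u_1,u_2$ told apart by the one-sided annihilation $u_1\times u_2=0\neq u_2\times u_1$---so that any isomorphism must match them one by one, and then read $\lambda$ off the coefficient in the linear dependence $u_3=u_1+\frac{\lambda}{1+\lambda}u_2$. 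Your route buys two things the paper's terser argument leaves implicit or slightly off: first, the claim of three nonzero idempotents for every $\lambda\neq-1$ degenerates at $\lambda=0$, where $x_2=x_3=e_1$, and your separate counting argument for $B_0$ (two nonzero idempotents, versus one for $B_{-1}$ and three generically) covers this case cleanly; second, the paper's ``no solutions in $B_\mu$'' assertion tacitly requires ruling out every assignment of the idempotents of $B_\mu$ to $x_1,x_2,x_3$, i.e.\ exactly the $S_3$-ambiguity you confront head-on with the canonical labelling. Incidentally, your formula $u_3=e_1-\frac{\lambda}{1+\lambda}e_2$ is the correct root of the idempotency quadratic $(1+\lambda)\beta^2+(1+2\lambda)\beta+\lambda=0$; the paper's $x_3=e_1+\frac{\lambda}{1+\lambda}e_2$ has a sign slip (it satisfies the quadratic only for $\lambda\in\{0,-\frac12\}$). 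What the paper's version buys in exchange is brevity: once the labelling issue is granted, a single commutator coefficient such as $\frac{1}{1+\lambda}$ already determines $\lambda$, with no need to verify uniqueness of the right identity.
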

\begin{proof}
The multiplication law of $B_\lambda$ (which we do not denote by any special symbol in this section) is
$$e_1e_1=e_1+\lambda e_1\,\ e_1e_2=e_1+\lambda e_1,\ e_2e_1=e_2+\lambda e_1,\ e_2e_2=e_1+\lambda e_1.$$ Denote by $[\cdot,\cdot]$ the commutator of two elements, i.e. $[x,y]=xy-yx$. We have that $[e_1,e_2]=e_1-e_2.$ Consider the equation $x^2=x$ in $B_\lambda$. If $\lambda = -1,$ it has a unique non-zero solution $x=e_1-e_2$. Otherwise, if $\lambda\neq -1$ it has 3 non-zero solutions in $B_\lambda:$ $x_1=e_1-e_2,\ x_2=\frac{1}{1+\lambda}e_1,\ x_3=e_1+\frac{\lambda}{1+\lambda}e_2$. This shows that $B_{-1}$ is not isomorphic to any other $B_\lambda, \lambda\in k\setminus\{-1\}.$

If $\lambda\neq -1$, we also  have $[x_1,x_2]=-\frac{1}{1+\lambda}x_1,\ [x_1,x_3]=\frac{1+2\lambda}{1+\lambda}x_1,\ [x_2,x_3]=\frac{\lambda}{(1+\lambda)^2}x_1.$ Hence the system of equations 
$$\begin{cases} x_i^2=x_i,\ i=1,2,3, \\ [x_1,x_2]=-\frac{1}{1+\lambda}x_1,\\ [x_1,x_3]=\frac{1+2\lambda}{1+\lambda}x_1\end{cases}$$ has a non-zero solution in $B_\lambda $ and no solutions in $B_\mu$, $\mu\neq\lambda.$ Since an isomorphism must preserve the solutions of this system, the algebras $B_\lambda$ are pairwise nonisomorphic.
\end{proof}
\subsection{The associative case.}\label{SEC-Associative}
For the unital associative case we begin by establishing some conditions that guarantee that a pair of homotopes $A_\Delta$ and $A_{\Delta'}$ are isomorphic.

In this section, $\cdot$ will denote the multiplication in $A_\Delta$, $\times $ will denote the multiplication in $A_{\Delta'}$, and the multiplication in the original algebra $A$ will be denoted simply by concatenation. Let $U$ be the group of units of $A$. Assume that there are $u,v\in U$ such that $\Delta'=u\Delta v.$ Define $\psi: A_\Delta\to A_{\Delta'}$ by $\psi(x)=v^{-1}xu^{-1}$. It is an isomorphism of algebras, since $$\psi(x\cdot y)=\psi(x\Delta y)=v^{-1}x\Delta y u^{-1}=v^{-1}xu^{-1}u\Delta vv^{-1} y u^{-1}=v^{-1}xu^{-1}\Delta'v^{-1} y u^{-1}=\psi(x)\times \psi(y).$$

\begin{Example}
Let $A$ be the algebra of $n\times n$ matrices over the base field, i.e. $A=\Mat_n(k).$ Let $\Delta\in A$ be a rank $r$ matrix, then there are some matrices $U, V\in A$ such that $U\Delta V$ is a diagonal matrix with the first $r$ diagonal entries are equal to 1 and all other entries are zero. From the above considerations, we see that the rank $r$ completely defines the homotope $A_\Delta$ up to isomorphism, so we have $n+1$ isomorphism classes of $\Delta$-homotopes.
\end{Example}

Now assume that there is an automorphism $\varphi$ of $A$ such that $\varphi(\Delta)=\Delta'.$ Then the same $k$-linear map $\varphi$, viewed as a map $\varphi: A_\Delta\to A_{\Delta'},$ is an isomorphism of algebras, since we have $$\varphi(x\cdot y)=\varphi(x\Delta y)=\varphi(x)\varphi(\Delta)\varphi(y)=\varphi(x)\Delta'\varphi(y)=\varphi(x)\times \varphi(y).$$

\begin{Example}
Let $\Gamma$ be a quiver on two vertices $e_1, e_2$ and with two arrows $\alpha,\beta: e_1\to e_2$, and consider the quiver path algebra $A=k\Gamma$. Let $\Delta(\lambda)=\alpha+\lambda \beta$, $\lambda\in k\setminus {0}.$ Then $\Delta(\lambda),\ \Delta(\lambda')$ do not differ by an invertible element for $\lambda\neq\lambda'$, but the automorphism $\varphi$ of $A$ which maps $\beta$ to $\frac{\lambda'}{\lambda}\beta$ and is the identity on $\alpha$ and stationary paths takes $\Delta(\lambda)$ to $\Delta(\lambda')$ and, therefore, induces an isomorphism between $A_\Delta$ and $A_\Delta'.$ In this case $A$ also has finitely many isomorphism classes of $\Delta$-homotopes.
\end{Example}

From the above considerations, we have that both $U\times U^{op}$ and $\Aut A$ act on the set of $\Delta-$homotopes by isomorphisms. But also note that all the maps of the form $\psi(x)=uxu^{-1}$ for $u\in U$ are actually automorphisms of $A$, so it is enough to only consider the action of $U$ by multiplication of the left, and that for an automorphism $\psi$ we have $\psi(v^{-1}xu^{-1})=\psi(v)^{-1}\psi(x)\psi(u)^{-1}$. The following theorem collects the previous considerations.

\begin{Proposition}
The group $G=\Aut A\rtimes U$ acts on the set of $\Delta$-homotopes  of $A$ by isomorphisms.
\end{Proposition}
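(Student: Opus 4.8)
The plan is to package the two isomorphism constructions established above into a single action and then verify the semidirect-product bookkeeping. The set being acted on is $\{A_\Delta : \Delta\in A\}$, and an element of $G$ will act by sending a homotope to an isomorphic one via a map on the parameters $\Delta$. First I would record the two basic moves: left multiplication by a unit, $\Delta\mapsto u\Delta$, which yields the isomorphism $A_\Delta\to A_{u\Delta}$, $x\mapsto xu^{-1}$ (the case $v=1$ of $\psi$); and an automorphism $\varphi\in\Aut A$, which yields $\varphi:A_\Delta\to A_{\varphi(\Delta)}$. The preliminary reduction is that the full $U\times U^{op}$ action is already subsumed: writing $u\Delta v=(uv)\bigl(v^{-1}\Delta v\bigr)$ exhibits two-sided multiplication as left multiplication by $uv$ composed with the inner automorphism $x\mapsto v^{-1}xv$, which lies in $\Aut A$. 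Hence it suffices to combine left multiplication by $U$ with $\Aut A$.

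Next I would define the action of $G=\Aut A\rtimes U$ on parameters by $(\varphi,u)\cdot\Delta=\varphi(u\Delta)$ and on homotopes by the map $\Psi_{(\varphi,u)}:A_\Delta\to A_{(\varphi,u)\cdot\Delta}$, $x\mapsto\varphi(xu^{-1})$. That $\Psi_{(\varphi,u)}$ is an algebra isomorphism is a direct computation combining the two cases: writing $x\cdot y=x\Delta y$ in $A_\Delta$ and $x\times y=x\,\varphi(u\Delta)\,y$ in the target, one checks $\Psi_{(\varphi,u)}(x\cdot y)=\varphi(x\Delta yu^{-1})=\varphi(xu^{-1})\,\varphi(u\Delta)\,\varphi(yu^{-1})=\Psi_{(\varphi,u)}(x)\times\Psi_{(\varphi,u)}(y)$, using that $\varphi$ is multiplicative and that $u^{-1}u=1$.

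It remains to verify that this is genuinely a group action, i.e. that performing the move for $(\varphi_2,u_2)$ and then the move for $(\varphi_1,u_1)$ agrees with the move for the appropriate product in $G$. The semidirect-product structure is forced by the identity $\varphi(u\Delta)=\varphi(u)\,\varphi(\Delta)$, equivalently $\varphi\circ L_u=L_{\varphi(u)}\circ\varphi$ on parameters, where $L_u$ denotes left multiplication; since every $\varphi\in\Aut A$ restricts to an automorphism of the unit group $U$, this is precisely the twisting relation defining $\Aut A\rtimes U$. Carrying out the composition on a parameter $\Delta$ gives $(\varphi_1,u_1)\cdot\bigl((\varphi_2,u_2)\cdot\Delta\bigr)=\varphi_1(u_1)\,(\varphi_1\varphi_2)(u_2)\,(\varphi_1\varphi_2)(\Delta)$, which matches $(\varphi_1\varphi_2,\,\varphi_2^{-1}(u_1)u_2)\cdot\Delta$, thereby pinning down the group law; the corresponding identity for the maps $\Psi$ then follows from the same computation together with the relation $\varphi(v^{-1}xu^{-1})=\varphi(v)^{-1}\varphi(x)\varphi(u)^{-1}$ noted earlier.

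The statement is essentially a consolidation of the preceding observations, so no single step is a serious obstacle; the only place demanding care is fixing the conventions so that the composition order and the twisting in the semidirect product agree. Getting that twist right --- tracking which automorphism acts on which unit --- is the main technical point, and it is controlled precisely by the multiplicativity relation $\varphi(u\Delta)=\varphi(u)\varphi(\Delta)$.
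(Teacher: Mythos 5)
Your proof is correct and takes essentially the same route as the paper, whose proof is simply to collect the preceding observations: the isomorphisms $x\mapsto xu^{-1}$ and $\varphi$ itself, the reduction of the two-sided $U\times U^{op}$ action to left multiplication by units via inner automorphisms (your identity $u\Delta v=(uv)(v^{-1}\Delta v)$), and the compatibility relation $\varphi(v^{-1}xu^{-1})=\varphi(v)^{-1}\varphi(x)\varphi(u)^{-1}$. Your explicit verification of the group law $(\varphi_1,u_1)(\varphi_2,u_2)=(\varphi_1\varphi_2,\varphi_2^{-1}(u_1)u_2)$ and of the cocycle condition for the maps $\Psi_{(\varphi,u)}$ is accurate and merely spells out bookkeeping the paper leaves implicit.
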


We note that there are isomorphisms of $\Delta$-homotopes which don't come from the action of the group $G=\Aut A\rtimes U$.

\begin{Example}
Consider the path algebra $k\Gamma$ of a quiver with $6$ vertices denoted $e_1,\ldots, e_6$ and $2$ arrows $e_i\to e_{i+1}$ for $i=1,\ldots,5$. There are $2^5=32$ paths of length $5$. The group of automorphisms of the path algebra is isomorphic to $\text{GL}(2)^5$, so has dimension $20.$ On the subspace generated by paths of length $5$ multiplying by a unit on the left may only change an element by some scalar factor: for $\Delta\in e_6k\Gamma e_1$ and a unit $u=\lambda e_6+r$, $e_6re_6=0$, $\lambda\neq 0$, we have  $u\Delta=\lambda \Delta.$ So just from dimension considerations the group $G$ can't have finitely many orbits in the set of $\Delta$-homotopes with $\Delta\in e_6k\Gamma e_1$. However, these homotopes are actually all isomorphic, and an explicit isomorphism $\varphi: A_\Delta\to A_{\Delta'}$  can be constructed by setting $\varphi(e_1)=e_1,\ \varphi(e_6)=e_6,\varphi(\Delta)=\Delta'$ and arbitrarily extending this to an invertible linear map.
\end{Example}

We now construct an example of a finite-dimensional algebra with infinitely many pairwise non-isomorphic homotopes. We start with the infinite family of 4-dimensional associative algebras described in \cite{gabriel1975}. Denote $R_\lambda=k\langle x,y \rangle/{(x^2,y^2,xy-\lambda yx)}.$ We have $R_\lambda\cong R_{\lambda'}$ if and only if $\lambda =\lambda'$ or $\lambda=\lambda^{-1}.$ To see this, note that in the algebra $R_{\lambda}$, $\lambda\neq -1$ the only elements satisfying $t^2=0$ are of the form $a_x x+a_{xy}xy$ or $b_yy+b_{xy}xy$, $a_x,a_y,a_{xy},b_{xy}\in k$, so the algebra $R_\lambda$ has a pair of square-zero generators, and any two such generators $z_1,z_2$ must satisfy $z_1z_2=\lambda^{\pm 1}z_2z_1$. The algebra $R_{-1}$ is unique in that it has a three-dimensional space of square-zero elements.

Now define $B=k\langle x,y,h_1,h_2\rangle/ {I},$ where the ideal of relations $I$ is generated by
\begin{itemize}
    \item $x^2,\ y^2,\ h_ih_j,\ i,j=1,2$
    \item $xh_1y-yh_2x,$
    \item all the words of length 3 except $xh_1y,yh_2x$.
\end{itemize}

Then $B$ is a 16-dimensional unital associative algebra. Let $\Delta(\lambda)=\lambda h_1+h_2$. We claim that the homotopes belonging to this infinite family are generally nonisomorphic.

\begin{Theorem}
The associative algebras $B_{\Delta(\lambda)}, B_{\Delta(\lambda')}$, $\lambda,\lambda'\in k$ are isomorphic if and only if $\lambda'=\lambda^{\pm 1}$.
\end{Theorem}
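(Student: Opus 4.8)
The plan is to compute the $\Delta(\lambda)$-homotope multiplication explicitly and extract an isomorphism invariant that distinguishes $\lambda$ from $\lambda'$ unless $\lambda' = \lambda^{\pm 1}$, mirroring the strategy used for $R_\lambda$ in the paragraph preceding the statement. First I would understand the algebra $B_{\Delta(\lambda)}$ concretely. In $B$, the only length-$3$ words surviving are $xh_1y$ and $yh_2x$, and they are forced equal by the relation $xh_1y = yh_2x$; all products involving more than one $h$, or squares of $x,y$, vanish. With $\Delta(\lambda) = \lambda h_1 + h_2$, the homotope product is $a \cdot b = a\,\Delta(\lambda)\,b$. The key observation is that the generators $x$ and $y$ become square-zero elements of $B_{\Delta(\lambda)}$ (since $x\Delta x$ and $y\Delta y$ involve only $xh_iy$-type words that vanish), while the mixed products $x \cdot y = x\Delta(\lambda)y = \lambda\, xh_1y$ and $y \cdot x = y\Delta(\lambda)x = y h_2 x = xh_1y$ (using the defining relation) pick up the asymmetry: $x\cdot y = \lambda\,(y\cdot x)$. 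So the subalgebra of $B_{\Delta(\lambda)}$ generated by $x$ and $y$ reproduces exactly the relation $xy = \lambda yx$ of $R_\lambda$, together with $x^2 = y^2 = 0$.

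Next I would isolate this copy of $R_\lambda$ intrinsically, so that an abstract isomorphism $B_{\Delta(\lambda)} \cong B_{\Delta(\lambda')}$ is forced to respect it. The natural invariant is the structure built from square-zero elements and their commutation, exactly as in the analysis of $R_\lambda$ recalled above. Concretely, I would describe the space $Z = \{t \in B_{\Delta(\lambda)} : t \cdot t = 0\}$ and the bilinear commutation behaviour of pairs of square-zero elements, and show that the ``generic'' square-zero generators $x,y$ and the scalar $\lambda^{\pm 1}$ relating $x\cdot y$ and $y \cdot x$ can be recovered from the isomorphism type of $B_{\Delta(\lambda)}$ alone. Since an algebra isomorphism preserves square-zero elements and preserves the homotope products, it must carry a pair of generating square-zero elements of $B_{\Delta(\lambda)}$ to such a pair in $B_{\Delta(\lambda')}$, and the commutation scalar is an invariant up to the ambiguity $\lambda \mapsto \lambda^{-1}$ coming from swapping the two generators. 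This yields the forward implication: $B_{\Delta(\lambda)} \cong B_{\Delta(\lambda')}$ forces $\lambda' = \lambda^{\pm 1}$.

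For the converse I would exhibit explicit isomorphisms. The equality $\lambda' = \lambda$ is trivial, so it suffices to handle $\lambda' = \lambda^{-1}$, for which I would use the automorphism of $B$ swapping the roles that produce the two surviving words — sending $x \leftrightarrow y$ and $h_1 \leftrightarrow h_2$ (checking it preserves the ideal $I$, in particular the relation $xh_1y - yh_2x$ is preserved since it is symmetric under this swap). Tracking its effect on $\Delta(\lambda) = \lambda h_1 + h_2$ gives $\lambda h_2 + h_1 = \lambda\,\Delta(\lambda^{-1})$, so up to the scalar $\lambda$ (which only rescales $\Delta$ and hence produces an isomorphic homotope) this automorphism carries $A_{\Delta(\lambda)}$ to $A_{\Delta(\lambda^{-1})}$, using the general principle established earlier that an automorphism of $A$ taking $\Delta$ to $\Delta'$ induces an isomorphism $A_\Delta \cong A_{\Delta'}$, together with the observation that scaling $\Delta$ by a nonzero constant gives an isomorphic homotope.

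The main obstacle I anticipate is the forward direction: making precise and rigorous the claim that the commutation scalar $\lambda^{\pm 1}$ is an isomorphism invariant of the full $16$-dimensional algebra $B_{\Delta(\lambda)}$, rather than merely of the abstractly-chosen subalgebra generated by $x,y$. I would need to verify that the set of square-zero elements and the relevant generating pairs are genuinely characterized intrinsically — that no unexpected square-zero elements coming from the $h_i$ directions or from mixed combinations spoil the recovery of $\lambda$ — which is exactly the delicate point that the $R_\lambda$ analysis had to confront (there the exceptional value $\lambda = -1$ had a larger square-zero space). I would therefore pay special attention to whether $B_{\Delta(\lambda)}$ has any analogous exceptional values of $\lambda$ requiring separate treatment, and confirm that the invariant distinguishes all remaining $\lambda$ up to inversion.
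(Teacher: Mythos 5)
Your proposal is correct, and its forward direction runs along a genuinely different track than the paper's. The paper adjoins a unit, writes $\hat{B}_\lambda$ as a vector-space direct sum of a copy of $R_\lambda$ (spanned by $1,x,y,xh_1y$) and a complement $N$ spanned by the remaining words, argues that the homotope multiplication degenerates on $N$ so that any isomorphism $\hat{B}_\lambda\cong\hat{B}_{\lambda'}$ may be chosen to restrict to an isomorphism $R_\lambda\cong R_{\lambda'}$, and then quotes the classification $R_\lambda\cong R_{\lambda'}\Leftrightarrow\lambda'=\lambda^{\pm1}$ it established just beforehand; you instead run the square-zero/commutation-scalar analysis directly inside the $16$-dimensional homotope, never passing through the $R_\lambda$ classification. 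The obstacle you flag does close, and easily: since $h_1,h_2$ and all words of length $\geqslant 2$ multiply to zero against everything under $\cdot$, an element $t=\alpha 1_B+\beta x+\gamma y+(\text{longer words})$ satisfies $t\cdot t=\alpha^2\Delta(\lambda)+\alpha\beta(\lambda h_1x+h_2x+\lambda xh_1+xh_2)+\alpha\gamma(\lambda h_1y+h_2y+\lambda yh_1+yh_2)+\beta\gamma(1+\lambda)\,xh_1y$, so the $h_2$-component forces $\alpha=0$ and then $t\cdot t=\beta\gamma(1+\lambda)\,xh_1y$; consequently, for square-zero $s,t$ with $s\cdot t$ and $t\cdot s$ both nonzero the only realizable ratios $s\cdot t=\mu\,(t\cdot s)$ are $\mu\in\{\lambda,\lambda^{-1}\}$ (the large annihilator of ``unexpected'' square-zero elements pairs to zero with everything and is harmless), while $\lambda=-1$ is detected exactly as in $R_{-1}$ by the square-zero set being a hyperplane rather than a union of two smaller subspaces, and $\lambda=0$ by the one-sided vanishing $x\cdot y=0\neq y\cdot x$ --- you would need to write out this enumeration, which you only sketched, but it is routine. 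Your approach buys two things: it avoids the unitalization step, where the paper's claims ``$N\cdot N=0$'' and ``the only nonzero products of $N$ with $R_\lambda$ involve multiples of $1$'' are actually not literally true as stated (the unit $1_B$ of $B$ lies in $N$ and $1_B\cdot 1_B=\Delta(\lambda)\neq0$, $1_B\cdot x=\Delta(\lambda)x\neq0$, so the paper's decomposition argument needs a small repair that your direct computation sidesteps); and your converse --- the automorphism $x\leftrightarrow y$, $h_1\leftrightarrow h_2$, which preserves $I$ and sends $\Delta(\lambda)\mapsto\lambda\Delta(\lambda^{-1})$, combined with the previously established facts that automorphisms moving $\Delta$ and rescaling $\Delta$ by a unit yield isomorphic homotopes --- is spelled out explicitly, whereas the paper's proof only argues the ``only if'' half and leaves the ``if'' direction implicit. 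What the paper's route buys in exchange is reuse: it reduces the theorem to the already-proven statement about the family $R_\lambda$, so no new invariant computation is needed.
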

\begin{proof}
To simplify notation, we will write $B_\lambda$ instead of $B_{\Delta(\lambda)}$ in the proof. Let $\hat{B}_\lambda$ be the unital associative algebra obtained by adjoining a unit to $B_\lambda$. As a vector space, we can write $\hat{B}_\lambda$ as some arbitrary direct sum of $\Span(1,x,y, xh_1y)$ and the linear span of all the other paths, which we denote by $N$. Note that $1$ here is the unit of $\hat{B}_\lambda$, not the element of $B_\lambda$ coming from the unit of $B$.

Now let  $\cdot$ denote the multiplication in $\hat{B}_{\lambda}$. Note that $$x\cdot x=y\cdot y=0,$$ $$x\cdot y-\lambda y\cdot x=x(\lambda h_1+h_2)y-\lambda y(\lambda h_1+h_2)x=\lambda xh_1y-\lambda yh_2x=0,$$so $x,y$ satisfy the defining relations of the algebra $R_\lambda$ defined previously. 

This means that $\hat{B}_\lambda=R_\lambda\bigoplus N$ as a $k$-vector space, and $R_\lambda\cdot R_\lambda\subset R_\lambda,$ $N\cdot N=0.$ Moreover, $N\cdot R_\lambda= R_\lambda\cdot N=N$, with the only non-zero products being with scalar multiples of $1\in R_\lambda$. All of this implies that if $\hat{B}_\lambda$ were isomorphic to $\hat{B}_{\lambda'},$ it would be possible to choose an isomorphism restricting to an isomorphism $R_\lambda\cong R_{\lambda'},$ which does not exist unless $\lambda=\lambda'$ or $\lambda={\lambda'}^{-1}.$ Finally, we notice that a pair of (not necessarily unital) algebras $A_1,\ A_2$ are isomorphic if and only if the algebras $\hat{A}_1,\ \hat{A}_2$ obtained by adjoining a unit are isomorphic as unital algebras.
\end{proof}

\subsection{The well-tempered case.}

Let $A$ be a unital associative algebra. An element $\Delta\in A$ is called \textit{well-tempered} if $A\Delta A=A$ and $A$ is projective as a left and right $\hat{A}_\Delta$-module. If $A$ is finite-dimensional over $k$, it is enough to check that $A\Delta A=A.$ It has been shown that $\Delta-$homotopes with $\Delta$ well-tempered have some nice properties (see \cite{BZ1}).  Nevertheless, we will now show that there can still be an infinite family of nonisomorphic $\Delta$-homotopes even for all $\Delta$ well-tempered.

 Consider $M=\Mat_2(A)$, the algebra of $2\times2$ matrices over $A.$ Let $\Lambda=\left(\begin{smallmatrix} 1\ 0\\ 0\ \Delta\end{smallmatrix}\right)$ for $\Delta\in A.$ This is a well-tempered element for any $\Delta$: it is easy to see that the two-sided ideal generated by the matrix $\left(\begin{smallmatrix} 1\ 0\\ 0\ 0\end{smallmatrix}\right)$ is equal to $M$ and that $$\left(\begin{smallmatrix} 1\ 0\\ 0\ 0\end{smallmatrix}\right)\Delta\left(\begin{smallmatrix} 1\ 0\\ 0\ 0\end{smallmatrix}\right)=\left(\begin{smallmatrix} 1\ 0\\ 0\ 0\end{smallmatrix}\right).$$
 
 Consider the homotope ${M}_\Lambda$, with multiplication law denoted by $\cdot$, so for $X, Y\in {M}_\Lambda\ $ $X\cdot Y=X\Lambda Y$, with the multiplication on the right being the regular matrix multiplication of $M$. Let $\hat{M}_\Lambda$ be the homotope with the adjoined unit, with the multiplication also denoted by $\cdot$.
\begin{Theorem}\label{PROP-well-tempered}
Assume the finite-dimensional unital associative $k$-algebra $A$ and elements $\Delta,\Delta'\in A$ are such that both $A_\Delta$ and $A_{\Delta'}$ have no non-zero idempotents. Let $\Lambda=\left(\begin{smallmatrix} 1\ 0\\ 0\ \Delta\end{smallmatrix}\right),\ \Lambda'=\left(\begin{smallmatrix} 1\ 0\\ 0\ \Delta'\end{smallmatrix}\right)$. Then if $\hat{M}_\Lambda$ and $\hat{M}_{\Lambda'}$ are isomorphic as unital algebras, then $A_\Delta$ and $A_{\Delta'}$ are isomorphic as non-unital algebras.
\end{Theorem}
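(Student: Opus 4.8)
The plan is to recover the algebra $A_\Delta$ (up to isomorphism) from intrinsic structural data of the big homotope $\hat{M}_\Lambda$, so that an isomorphism $\hat{M}_\Lambda\cong\hat{M}_{\Lambda'}$ is forced to descend to an isomorphism $A_\Delta\cong A_{\Delta'}$. The guiding idea is that $M_\Lambda$, as a non-unital algebra, should decompose into recognizable pieces indexed by the matrix positions, and that the block carrying the $\Delta$-twist is precisely a copy of $A_\Delta$. Concretely, for $X,Y\in M_\Lambda$ the product is $X\Lambda Y$, and writing $X=(x_{ij})$, $Y=(y_{ij})$ one computes the $(i,j)$-entry of $X\Lambda Y$ using the diagonal twist $\Lambda=\mathrm{diag}(1,\Delta)$. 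The $(2,2)$-block multiplication reads $x_{21}y_{12}+x_{22}\Delta y_{22}$, and more usefully the lower-right corner, when restricted appropriately, reproduces exactly the multiplication $a\cdot b = a\Delta b$ of $A_\Delta$. So the first step is to identify inside $\hat{M}_\Lambda$ the four "matrix-unit-like" subspaces $M_{ij}=e_{ij}A$ and record how the homotope product mixes them.

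\emph{First I would} isolate a canonical two-sided ideal or subquotient of $\hat{M}_\Lambda$ that is isomorphic to $A_\Delta$. The natural candidate is the corner subalgebra sitting in the $(2,2)$-position: elements of the form $\left(\begin{smallmatrix}0&0\\0&a\end{smallmatrix}\right)$ multiply under $\cdot$ by $\left(\begin{smallmatrix}0&0\\0&a\end{smallmatrix}\right)\cdot\left(\begin{smallmatrix}0&0\\0&b\end{smallmatrix}\right)=\left(\begin{smallmatrix}0&0\\0&a\Delta b\end{smallmatrix}\right)$, which is visibly a copy of $A_\Delta$. The task is to characterize this copy purely in terms of the abstract ring structure of $\hat{M}_\Lambda$, without reference to the chosen matrix coordinates, since an abstract isomorphism need not respect them. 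Here is where the hypothesis that $A_\Delta$ and $A_{\Delta'}$ have \emph{no non-zero idempotents} must do the heavy lifting: the idempotent structure of $\hat{M}_\Lambda$ should pin down the relevant corner up to the ambiguity allowed by automorphisms. I would analyze the idempotents of $\hat{M}_\Lambda$ (note $1$ denotes the adjoined unit, distinct from the matrix identity), show that the nontrivial idempotents coming from the matrix units $e_{11}$, $e_{22}$ are essentially the only "building-block" idempotents available, and use an isomorphism $\hat{M}_\Lambda\cong\hat{M}_{\Lambda'}$ to match up these idempotents. Conjugating by the isomorphism then carries the $(2,2)$-corner of one to an idempotent-defined corner of the other, and reading off the induced multiplication yields $A_\Delta\cong A_{\Delta'}$.

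\emph{The main obstacle} I anticipate is the non-uniqueness of the idempotent decomposition: a priori an abstract isomorphism could send the distinguished corner idempotent to a conjugate or to an idempotent that does not look like a matrix unit in the target coordinates, and the homotope multiplication $X\Lambda Y$ is not unital, so the usual Peirce-decomposition machinery for unital rings does not apply directly to $M_\Lambda$ itself but only after adjoining the unit. Controlling exactly which idempotents can occur — and ruling out "spurious" idempotents that would come from non-trivial idempotents of $A_\Delta$ — is precisely what the no-idempotent hypothesis is designed to prevent, so the crux is to turn that hypothesis into a rigidity statement about the idempotents of $\hat M_\Lambda$. Once the corner is canonically recognized and its induced product identified with the $\Delta$-twisted multiplication, the final descent to $A_\Delta\cong A_{\Delta'}$ is formal, using the same restriction-of-isomorphism argument as in the previous theorem (passing between the unital and non-unital pictures via adjoining a unit).
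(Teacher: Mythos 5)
Your overall route is the same as the paper's: recognize inside $\hat{M}_\Lambda$ an idempotent corner carrying the $\Delta$-twisted product, turn the no-idempotent hypothesis into a rigidity statement about idempotents, transport the corner along the isomorphism, and descend by stripping the adjoined unit. However, one of your concrete landmarks is wrong in a way that matters. The matrix unit $e_{22}$ is \emph{not} an idempotent of the homotope: $e_{22}\cdot e_{22}=e_{22}\Lambda e_{22}=\left(\begin{smallmatrix} 0&0\\ 0&\Delta\end{smallmatrix}\right)$, and, more strongly, the very hypothesis that $A_\Delta$ has no non-zero idempotents means the $(2,2)$-corner contains \emph{no} non-zero idempotents whatsoever (an idempotent $\left(\begin{smallmatrix} 0&0\\ 0&d\end{smallmatrix}\right)$ of $M_\Lambda$ is exactly an idempotent $d$ of $A_\Delta$). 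Consequently your ``natural candidate,'' the copy of $A_\Delta$ in the $(2,2)$-position, is not of the form $f\cdot\hat{M}_\Lambda\cdot f$ for any idempotent $f$, and it cannot be pinned down by idempotent data directly. The paper's fix, which you gesture at but do not state, is to take $e=\left(\begin{smallmatrix} 1&0\\ 0&0\end{smallmatrix}\right)$ and $\varepsilon=\mathbf{1}-e$ with $\mathbf{1}$ the adjoined unit; then $\varepsilon\cdot\hat{M}_\Lambda\cdot\varepsilon$ is spanned by $\varepsilon$ together with the $(2,2)$-matrices and is isomorphic to $\hat{A}_\Delta$, not $A_\Delta$. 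So what the isomorphism transports is $\hat{A}_\Delta\cong\hat{A}_{\Delta'}$, and $A_\Delta\cong A_{\Delta'}$ is recovered only at the very end via the fact that non-unital algebras are isomorphic if and only if their unitalizations are.

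The second, larger gap is that the ``rigidity statement'' you correctly identify as the crux is left entirely as a black box, and it is the actual content of the paper's proof. It has two parts. (i) \emph{Primitivity of $\varepsilon$}: any nontrivial orthogonal decomposition of $\varepsilon$ must have the form $\varepsilon=(\mathbf{1}-X)+Y$ with $X,Y$ idempotents of $M_\Lambda$, whence $X=Y+e$; idempotency of $Y+e$ forces $Y\cdot e+e\cdot Y=0$, which in entries of $Y=\left(\begin{smallmatrix} a&b\\ c&d\end{smallmatrix}\right)$ reads $\left(\begin{smallmatrix} 2a&b\\ c&0\end{smallmatrix}\right)=0$, so $Y=\left(\begin{smallmatrix} 0&0\\ 0&d\end{smallmatrix}\right)$ with $d$ idempotent in $A_\Delta$, hence $d=0$ by hypothesis --- this is precisely where the no-idempotent assumption does its work, and your proposal never performs this (short but essential) computation. (ii) \emph{Telling the two corners apart}: an abstract isomorphism $\varphi$ must be prevented from matching $\varepsilon$ with (a conjugate of) $e$; the paper does this with the dimension count $\dim\varepsilon\cdot\hat{M}_\Lambda\cdot\varepsilon=\dim A+1>\dim A=\dim e\cdot\hat{M}_\Lambda\cdot e$. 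Without (i) and (ii), ``use an isomorphism to match up these idempotents'' has no force; with them, the conclusion follows exactly along the lines you sketch.
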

\begin{proof}
 
 Let $e=\left(\begin{smallmatrix} 1\ 0\\ 0\ 0\end{smallmatrix}\right)$, $\varepsilon=\mathbf{1}-e$, where $\mathbf{1}$ is the adjoined unit of $\hat{M}_\Lambda.$ Note that $\forall X\in M_\Lambda\subset\hat{M}_\Lambda$ we have
 $$e\cdot X=e\Lambda X=eX,\ X\cdot e=X\Lambda e=Xe,$$ $$\varepsilon\cdot X=X-e\cdot
 X=\left(\begin{smallmatrix} 0\ 0\\ 0\ 1\end{smallmatrix}\right)X,\ X\cdot \varepsilon=X-X\cdot
 e=X\left(\begin{smallmatrix} 0\ 0\\ 0\ 1\end{smallmatrix}\right).$$  
 
 A straightforward computation now shows that both $e$ and $\varepsilon$ are orthogonal idempotents,  $\varepsilon\cdot \hat{M}_\Lambda\cdot \varepsilon$ is generated as a $k$-vector space by the elements $\varepsilon$ and $\left(\begin{smallmatrix} 0\ 0\\ 0\ x\end{smallmatrix}\right)$, $x\in A$, and $e\cdot\hat{M}_\Lambda\cdot e$ consists of elements of the form $\left(\begin{smallmatrix} x\ 0\\ 0\ 0\end{smallmatrix}\right)$, $x\in A$.
 
 We also have that $\forall x,y\in A$ 
 $$\left(\begin{smallmatrix} x\ 0\\ 0\ 0\end{smallmatrix}\right)\cdot \left(\begin{smallmatrix} y\ 0\\ 0\ 0\end{smallmatrix}\right)=\left(\begin{smallmatrix} xy\ 0\\ 0\ 0\end{smallmatrix}\right),\ \left(\begin{smallmatrix} 0\ 0\\ 0\ x\end{smallmatrix}\right)\cdot \left(\begin{smallmatrix} 0\ 0\\ 0\ y\end{smallmatrix}\right)=\left(\begin{smallmatrix} 0\ 0\\ 0\ x\Delta y\end{smallmatrix}\right),\ \varepsilon\cdot \left(\begin{smallmatrix} 0\ 0\\ 0\ x\end{smallmatrix}\right)=\left(\begin{smallmatrix} 0\ 0\\ 0\ x\end{smallmatrix}\right)\cdot\varepsilon=\left(\begin{smallmatrix} 0\ 0\\ 0\ x\end{smallmatrix}\right).$$ This computation shows that $\hat{A}_\Delta,$ and $\varepsilon\cdot \hat{M}_\Lambda\cdot \varepsilon$ are isomorphic as unital $k$-algebras,  with the isomorphism $f:\hat{A}_\Delta\to \varepsilon\cdot \hat{M}_\Lambda\cdot \varepsilon$, $f(\mathbf{1})=\mathbf{\varepsilon},$ $f(x)=\left(\begin{smallmatrix} 0\ 0\\ 0\ x\end{smallmatrix}\right),$ where $\mathbf{1}$ is the adjoined unit of $\hat{A}_\Delta.$ On the other hand, $e\cdot\hat{M}_\Lambda\cdot e$ is evidently isomorphic to $A.$

Assume now that we have an isomorphism $\varphi:\hat{M}_\Lambda\to\hat{M}_{\Lambda'},$ $\Lambda'=\left(\begin{smallmatrix} 1\ 0\\ 0\ \Delta'\end{smallmatrix}\right)$, and that $\varepsilon$ is a primitive idempotent both as an element of $\hat{M}_{\Lambda}$ and of $\hat{M}_{\Lambda'}$. Since $$\text{dim } \varepsilon\cdot \hat{M}_\Lambda\cdot \varepsilon=\text{dim }\hat{A}_\Delta=\text{dim }A+1=\text{dim } e\cdot \hat{M}_\Lambda\cdot e+1,$$ and the same is true for $\hat{M}_{\Lambda'}$, $\varphi$ must restrict to an isomorphism of $\varepsilon\cdot \hat{M}_\Lambda\cdot \varepsilon$ and $\varepsilon\cdot \hat{M}_{\Lambda'}\cdot \varepsilon$. This is the same as an isomorphism of $\hat{A}_\Delta$ and $\hat{A}_{\Delta'}.$

It remains to show that under the assumption of the proposition, $\varepsilon$ is indeed a primitive idempotent of both $\hat{M}_{\Lambda}$ and $\hat{M}_{\Lambda'}$. We will demonstrate it for $\hat{M}_\Lambda.$ To see this, assume on the contrary that it is not primitive. It is easy to see then that any decomposition into a sum of idempotents must have the form $\varepsilon=\mathbf{1}-e=\mathbf{1}-X+Y$ with $X,Y$ - idempotents of $M_\Lambda.$ Then $X=Y+e,$ so $(Y+e)^2=Y+e$ and $Y\cdot e+e\cdot Y=0.$ If we write $Y=\left(\begin{smallmatrix} a\ b\\ c\ d\end{smallmatrix}\right),$ we end up with the equality $\left(\begin{smallmatrix} 2a\ b\\ c\ 0\end{smallmatrix}\right)=0,$ so $Y=\left(\begin{smallmatrix} 0\ 0\\ 0\ d\end{smallmatrix}\right),$ and $Y$ being idempotent in $M_\Lambda$ is equivalent to $d$ being idempotent in $A_\Delta,$ which, by the assumption, implies that $d=0.$

\end{proof}
Now we take as $A$ the 16-dimensional algebra $B$ of \autoref{SEC-Associative} and as $\Delta$ we take elements of the family $\Delta(\lambda)=\lambda h_1+h_2,$ $\lambda\in k$. 
\begin{Corollary}
Let $M=Mat_2(B)$, where the algebra $B$ was defined in \autoref{SEC-Associative}, $\Delta(\lambda)=\lambda h_1+h_2$, $\lambda\in k,$ $\Lambda(\lambda)=\left(\begin{smallmatrix} 1\hfill\ 0\hfill\\ 0\hfill\ \Delta(\lambda)\hfill\end{smallmatrix}\right).$ Then the homotopes $M_{\Lambda(\lambda)}, M_{\Lambda(\lambda')}$ are isomorphic if and only if $\lambda'=\lambda^{\pm1}.$ Moreover, algebras $M_{\Lambda(\lambda)}, M_{\Lambda(\lambda')}$ are Morita equivalent if and only if $\lambda'=\lambda^{\pm1}.$
\end{Corollary}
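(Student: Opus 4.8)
The plan is to reduce both assertions of the corollary to the preceding Theorem on the algebras $B_{\Delta(\lambda)}$, using the unital hull $\hat{B}_{\Delta(\lambda)}$ as the bridge. The essential preliminary observation is that $\Delta(\lambda)=\lambda h_1+h_2$ lies in the radical of the local algebra $B$: every product of elements of $B_{\Delta(\lambda)}$ inserts a copy of $\Delta(\lambda)$, so a product of sufficiently many elements lands in a high power of $\operatorname{rad}B$ and vanishes. Hence $B_{\Delta(\lambda)}$ is nilpotent. In particular it has no nonzero idempotents (as required to apply \autoref{PROP-well-tempered}), and $\hat{B}_{\Delta(\lambda)}=k\mathbf{1}\oplus B_{\Delta(\lambda)}$ is a local algebra with radical $B_{\Delta(\lambda)}$ and residue field $k$. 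This locality is what will later force Morita equivalence among these algebras to collapse to isomorphism.

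For the isomorphism statement the forward direction is immediate from the machinery already in place: if $M_{\Lambda(\lambda)}\cong M_{\Lambda(\lambda')}$, then adjoining units gives $\hat{M}_{\Lambda(\lambda)}\cong\hat{M}_{\Lambda(\lambda')}$, \autoref{PROP-well-tempered} (whose idempotent hypothesis we just verified) yields $B_{\Delta(\lambda)}\cong B_{\Delta(\lambda')}$, and the preceding Theorem forces $\lambda'=\lambda^{\pm1}$. For the converse I would exhibit the isomorphisms explicitly. The case $\lambda'=\lambda$ is trivial, so take $\lambda'=\lambda^{-1}$ (in particular $\lambda\neq0$). The swap $\sigma\colon x\leftrightarrow y,\ h_1\leftrightarrow h_2$ preserves all defining relations of $B$ (it sends $xh_1y-yh_2x$ to its negative) and is therefore an automorphism of $B$ with $\sigma(\Delta(\lambda))=\lambda\,\Delta(\lambda^{-1})$. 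Applying $\sigma$ entrywise gives an automorphism of $M=\Mat_2(B)$ carrying $\Lambda(\lambda)$ to $\operatorname{diag}(1,\lambda\Delta(\lambda^{-1}))$, so by the automorphism criterion of \autoref{SEC-Associative} we get $M_{\Lambda(\lambda)}\cong M_{\operatorname{diag}(1,\lambda\Delta(\lambda^{-1}))}$. Since $\operatorname{diag}(1,\lambda\Delta(\lambda^{-1}))=\operatorname{diag}(1,\lambda)\cdot\Lambda(\lambda^{-1})$ with $\operatorname{diag}(1,\lambda)$ a unit of $M$, the invertible-element criterion of \autoref{SEC-Associative} identifies this with $M_{\Lambda(\lambda^{-1})}$. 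Composing yields $M_{\Lambda(\lambda)}\cong M_{\Lambda(\lambda^{-1})}$.

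For the Morita statement the backward direction follows since isomorphic algebras are Morita equivalent. For the forward direction I would first show that $\hat{M}_{\Lambda(\lambda)}$ is Morita equivalent to $\hat{B}_{\Delta(\lambda)}$. In the proof of \autoref{PROP-well-tempered} it was already shown that $\varepsilon\cdot\hat{M}_\Lambda\cdot\varepsilon\cong\hat{B}_{\Delta(\lambda)}$, so it suffices to check that $\varepsilon$ is a full idempotent, i.e. that the two-sided ideal it generates is all of $\hat{M}_\Lambda$. Using well-temperedness, write $1=\sum_i a_i\Delta b_i$; then a direct computation gives $\sum_i\left(\begin{smallmatrix}0&a_i\\0&0\end{smallmatrix}\right)\cdot\varepsilon\cdot\left(\begin{smallmatrix}0&0\\b_i&0\end{smallmatrix}\right)=\left(\begin{smallmatrix}1&0\\0&0\end{smallmatrix}\right)=e$, so the ideal contains $e$ and hence $\mathbf{1}=e+\varepsilon$. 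By the standard theory of full idempotents, $\hat{M}_\Lambda$ is Morita equivalent to $\varepsilon\hat{M}_\Lambda\varepsilon\cong\hat{B}_{\Delta(\lambda)}$. Consequently, if $M_{\Lambda(\lambda)}$ and $M_{\Lambda(\lambda')}$ are Morita equivalent (which, for these non-unital algebras, I read via their unital hulls), then $\hat{B}_{\Delta(\lambda)}$ and $\hat{B}_{\Delta(\lambda')}$ are Morita equivalent. Both are local, in particular basic, and Morita equivalent basic algebras are isomorphic; thus $\hat{B}_{\Delta(\lambda)}\cong\hat{B}_{\Delta(\lambda')}$, which gives $B_{\Delta(\lambda)}\cong B_{\Delta(\lambda')}$ and, by the preceding Theorem, $\lambda'=\lambda^{\pm1}$.

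The main obstacle, and the conceptual heart of the argument, is the Morita forward direction: Morita equivalence is a priori far coarser than isomorphism, so the reduction to the preceding Theorem works only because of two facts that must be secured carefully. First, the full-idempotent computation collapses $\hat{M}_\Lambda$ to $\hat{B}_\Delta$, and this relies essentially on the well-temperedness relation $A\Delta A=A$ (without it, $e$ need not lie in the ideal generated by $\varepsilon$). Second, the nilpotence of $B_{\Delta(\lambda)}$ makes $\hat{B}_{\Delta(\lambda)}$ local, which is precisely what allows Morita equivalence to be upgraded to isomorphism. A minor point still to pin down is the interpretation of Morita equivalence for the non-unital algebras $M_{\Lambda(\lambda)}$, which I would handle by passing throughout to the unital hulls $\hat{M}_{\Lambda(\lambda)}$.
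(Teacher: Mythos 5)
The isomorphism half of your proposal is correct and essentially follows the paper: you verify the no-idempotent hypothesis of \autoref{PROP-well-tempered} (your nilpotence argument, via $\Delta(\lambda)\in\operatorname{rad}B$ and the vanishing of long words, is a valid variant of the paper's word-length argument) and reduce to the theorem on $B_{\Delta(\lambda)}$; your explicit converse via the swap automorphism $\sigma$ and the unit $\operatorname{diag}(1,\lambda)$ is correct and in fact supplies a direction the paper leaves implicit.

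The Morita half, however, has a genuine gap, and it is exactly at the point you flagged as the conceptual heart: $\varepsilon$ is \emph{not} a full idempotent of $\hat{M}_{\Lambda(\lambda)}$. Your fullness computation requires writing $1=\sum_i a_i\Delta(\lambda)b_i$ in $B$, i.e.\ $B\Delta(\lambda)B=B$ --- but $\Delta(\lambda)=\lambda h_1+h_2$ lies in $\operatorname{rad}B$ (as you yourself use to prove nilpotence of $B_{\Delta(\lambda)}$), so $B\Delta(\lambda)B\subseteq\operatorname{rad}B\neq B$. You have conflated well-temperedness of $\Lambda$ in $M=\Mat_2(B)$, which holds for \emph{any} $\Delta$ because of the unit in the $(1,1)$ corner (via $e\Lambda e=e$), with well-temperedness of $\Delta$ in $B$, which fails here by design. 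Concretely, letting $\pi\colon B\to k$ be the augmentation of the local algebra $B$, the map $\chi(c\mathbf{1}+Z)=c+\pi(Z_{11})$ is a character of $\hat{M}_{\Lambda(\lambda)}$ (multiplicativity holds because $\pi\bigl(Z_{12}\Delta Z'_{21}\bigr)=0$) with $\chi(\varepsilon)=0$, so the two-sided ideal generated by $\varepsilon$ lies in $\ker\chi$. Worse, $\hat{M}_{\Lambda(\lambda)}$ has two distinct one-dimensional simple modules ($\chi$ and the augmentation killing $M_{\Lambda(\lambda)}$), whereas $\hat{B}_{\Delta(\lambda)}$ is local; since Morita equivalence preserves the number of simples, your intermediate claim that $\hat{M}_{\Lambda(\lambda)}$ is Morita equivalent to $\hat{B}_{\Delta(\lambda)}$ is false, not merely unproven. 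The paper's proof sidesteps this entirely: it observes that the algebras $\hat{M}_{\Lambda(\lambda)}$ are basic (their semisimple quotient is $k\times k$, as the character computation above shows), and for basic finite-dimensional algebras Morita equivalence coincides with isomorphism, so the Morita statement reduces immediately to the isomorphism statement you already established. Your proof can be repaired by replacing the full-idempotent reduction with this basicness observation.
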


\begin{proof}
According to \autoref{PROP-well-tempered}, we only need to check that $B_{\Delta(\lambda))}$ has no non-zero idempotents $\forall \lambda\in k$. This is the case, because such an idempotent $i$ must satisfy $$i=i\Delta(\lambda) i=\lambda ih_1i+ih_2i$$ in $B$, and there are no non-zero such $i$ in $B$, since either $i\Delta(\lambda)i=0$, or the shortest length of the words appearing in $\lambda ih_1i+ih_2i$ is strictly larger than the shortest length of the summands of $i.$
But the algebras $M_{\Lambda(\lambda)}$ are basic $\forall \lambda$, so for these algebras being isomorphic is equivalent to being Morita equivalent. 
\end{proof}



\begin{thebibliography}{30}
\bibitem{Al}
A. A. Albert.  Non-associative algebras:  I. fundamental concepts and isotopy.Annals of Mathematics, 43(4):685–707, 1942.

\bibitem{Aus}
M. Auslander, M. I. Platzeck, and G. Todorov. Homological theory of idempotent ideals. Transactions of the American Mathematical Society, 332(2):667–692, 1992.

\bibitem{ballico_1992}
E. Ballico.  On projective varieties with projectively equivalent zero-dimensional linear sections. Canadian Mathematical Bulletin, 35(1):3–13, 1992.

\bibitem{BZ2}
A. Bondal and I. Zhdanovskiy. Orthogonal pairs and mutually unbiased bases. Journal of Mathematical Sciences, 216(1):23–40, 2016.

\bibitem{BZ1}
A.  I.  Bondal  and  I.  Y.  Zhdanovskiy.   Theory  of  homotopes  with  applications  to  mutually  unbiased  bases,  harmonic  analysis  on  graphs,  and  perverse  sheaves. Russian Mathematical Surveys, 76(2):195, 2021.

\bibitem{Br} R. H. Bruck. Some results in the theory of linear non-associative algebras. Transactions of the American Mathematical Society, 56(2):141–199, 1944.

\bibitem{Eisenbud_linear}
D. Eisenbud.  Linear sections of determinantal varieties. American  Journal  of  Mathematics, 110(3):541–575, 1988.

\bibitem{gabriel1975}
P.  Gabriel.   Finite  representation  type  is  open.   In  V.  Dlab  and  P.  Gabriel,  editors,Representations  of  Algebras, pages 132–155, Berlin, Heidelberg, 1975. Springer Berlin Heidelberg.

\bibitem{HARRIS198471}
J. Harris and L. W. Tu.  On symmetric and skew-symmetric determinantal varieties. Topology, 23(1):71–84, 1984.

\bibitem{Ja}
N. Jacobson. Structure  theory  of  Jordan  algebras,  volume 5.  University of Arkansas, 1981.

\bibitem{KZ1}
A.  Kocherova  and  I.  Y.  Zhdanovskiy.   On  the  algebra  generated  by  projectors  withcommutator relation. Lobachevskii Journal of Mathematics, 38(4):670–687, 2017.

\bibitem{KZ2}
A. Kocherova and I. Y. Zhdanovskiy.  Some algebraic and geometric constructions inquantum information theory. AIP Conference Proceedings, 2362(1), 2021.

\bibitem{Lvovsky_1994}
S. L’vovsky.  On curves and surfaces with projectively equivalent hyperplane sections. Canadian Mathematical Bulletin, 37(3):384–392, 1994.

\bibitem{Ma}
A. I. Mal’tsev.  On a representation of nonassociative rings. Uspekhi Matematicheskikh Nauk, 7(1):181–185, 1952.

\bibitem{McC}
K. McCrimmon. A taste of Jordan algebras.  Springer, 2004.

\bibitem{pardini_remarks_1994}
R. Pardini. Some remarks on varieties with projectively isomorphic hyperplane sections. Geometriae Dedicata, 52(1):15–32, Aug. 1994.

\bibitem{Parfenov_2001}
P.  G.  Parfenov.   Orbits  and  their  closures  in  the  spaces ${\mathbb C}^{k_1} \otimes \dots \otimes {\mathbb C}^{k_r}$. Sbornik: Mathematics, 192(1):89–112, February 2001.

\bibitem{Psa}
C. Psaroudakis.  Homological theory of recollements of abelian categories. Journal of Algebra, 398:63–110, 2014.

\bibitem{Z}
I. Y. Zhdanovskiy.  Homotopes of finite-dimensional algebras. Communications in Algebra, 49(1):43–57, 2021.

\bibitem{KZ3}
I. Y. Zhdanovskiy  and  A. S. Kocherova.   Algebras  of  projectors  and  mutually  unbiased bases in dimension 7. Journal of Mathematical Sciences, 241(2):125–157, 2019

\end{thebibliography}

\end{document}